\newtheorem{theorem}{Theorem}[section]
\newtheorem{corollary}[theorem]{Corollary}
\newtheorem{proposition}[theorem]{Proposition}
\newtheorem*{notation*}{Notation}
\newtheorem*{p*}{Proposition~\ref{h.s.o.p}}
\theoremstyle{definition}
\newtheorem{definition}[theorem]{Definition}
\newtheorem{conjecture}[theorem]{Conjecture}
\newtheorem{remark}[theorem]{Remark}
\newcommand{\M}{{\operatorname{Mat}}}
\newcommand{\R}{{\operatorname{Rep}}}
\newcommand{\C}{{\mathbb C}}
\newcommand{\N}{{\mathbb N}}
\newcommand{\Z}{{\mathbb Z}}
\newcommand{\K}{K}
\newcommand{\Sl} {{\operatorname{SL} }}
\newcommand{\SL}{{\operatorname{SL}}}
\newcommand{\Hom}{\operatorname{Hom}}
\newcommand{\kar}{\operatorname{char}}
\newcommand{\Sym}{\operatorname{Sym}}
\newcommand{\GL}{\operatorname{GL}}
\newcommand{\Rep}{\operatorname{Rep}}
\newcommand{\SI}{\operatorname{SI}}
\newcommand{\I}{\operatorname{I}}
\newcommand{\Proj}{\operatorname{Proj}}
\title{Generating invariant rings of quivers in arbitrary characteristic}
\author{Harm Derksen and Visu Makam}
\thanks{The authors were supported by NSF grant DMS-1601229}
\begin{document}

\begin{abstract}
It is well known that the ring of polynomial invariants of a reductive group is finitely generated. However, it is difficult to give strong upper bounds on the degrees of the generators, especially over fields of positive characteristic. In this paper, we make use of the theory of good filtrations along with recent results on the null cone to provide polynomial bounds for matrix semi-invariants in arbitrary characteristic, and consequently for matrix invariants. Our results generalize to invariants and semi-invariants of quivers.
\end{abstract}

\maketitle

\section{Introduction}
Fix an algebraically closed field $\K$.

\subsection{Degree bounds on invariant rings}
For a rational representation $V$ of a reductive group $G$, the ring of polynomial invariants $K[V]^G$ is a finitely generated graded subalgebra of the coordinate ring $K[V]$ (see \cite{Haboush,Hilbert1,Hilbert2,Nagata}). Unfortunately, the proof is not constructive. While finding a minimal set of generators is perhaps too hard a question to answer, once could ask instead for a bound on the degree of generators.

\begin{definition}
$\beta(\K[V]^G)$ is defined as the smallest integer $D$ such that the homogeneous invariants of degree $\leq D$ generate the invariant ring $\K[V]^G$.
\end{definition}

The methods of Popov and Derksen give us a general method to obtain bounds in characteristic $0$, see \cite{Derksen1,DK,Popov1,Popov2}. Such a method does not exist in positive characteristic. Nevertheless, in the cases of invariants and semi-invariants of quivers, we are able to bring together the theory of good filtrations and our recent results on the null cone for matrix semi-invariants to obtain several strong bounds in arbitrary characteristic.

\subsection{Preliminaries on quivers}
A quiver is just a directed graph. Formally a quiver is a pair
$Q=(Q_0,Q_1)$, where $Q_0$ is a finite set of vertices and $Q_1$ is a finite set of arrows. For an arrow $a\in Q_1$ we denote its head and tail by $ha$ and $ta$ respectively.
 A representation $V$ of $Q$ over $K$ is a collection of finite dimensional $K$-vector spaces $V(i)$, $i \in Q_0$ together with a collection of $K$-linear maps $V(a):V(ta)\to V(ha)$, $a\in Q_1$. The dimension vector of $V$ is  $\alpha \in \N^{Q_0}$, where $\N = \{0,1,2\dots\}$, such that
$\alpha_i =\dim V(i)$ for all $x \in Q_0$. 
For a dimension vector $\alpha\in \N^{Q_0}$, fix a vector space $V(i)$ of dimension $\alpha_i$ at each vertex $i \in Q_0$. We define the representation space by:
$$\Rep(Q,\alpha)=\prod_{a\in Q_1} \Hom(V(ta),V(ha)).$$ 
 Consider the group $\GL(\alpha)=\prod_{i\in Q_0} \GL(V(i))$ and its subgroup $\Sl(\alpha)=\prod_{i \in Q_0}\SL(V(i))$.
The group $\GL(\alpha)$ 
acts on $\Rep(Q,\alpha)$ by:
$$
(A(i)\mid i \in Q_0)\cdot (V(a)\mid a\in Q_1)=(A(ha)V(a)A(ta)^{-1}\mid a\in Q_1).
$$
For $V\in \Rep(Q,\alpha)$, choosing a different basis means acting by the group $\GL(\alpha)$. The $\GL(\alpha)$-orbits in $\Rep(Q,\alpha)$ correspond to isomorphism classes of representations of dimension $\alpha$.
The group $\GL(\alpha)$ also acts (on the left)  on the ring $K[\Rep(Q,\alpha)]$ of polynomial functions on $\Rep(Q,\alpha)$ by
$$
A\cdot f(V)=f(A^{-1}\cdot V)
$$
where $f\in K[\Rep(Q,\alpha)]$, $V\in \Rep(Q,\alpha)$ and $A\in \GL(\alpha)$. We refer the reader to \cite{DW,DW2} for further details on quivers.

\subsection{Invariants of quivers}
The invariant ring $\I(Q,\alpha) = K[\R(Q,\alpha)]^{\GL(\alpha)}$ is called the ring of invariants for $Q$ with respect to the dimension vector $\alpha$.

If $Q$ is the $m$-loop quiver and we choose the dimension vector $n \in \N^{Q_0} = \N$, then $\R(Q,\alpha) = \M_{n,n}^m$ and $\GL(\alpha) = \GL_n$ acts on $\M_{n,n}^m$ by simultaneous conjugation. The ring of invariants $S(n,m): = K[\M_{n,n}^m]^{\GL_n}$ is commonly referred to as the ring of matrix invariants. 

In characteristic $0$, the work of Procesi and Razmyslov in the $70$s gives the following bound on the degree of generators, see \cite{Procesi,Razmyslov,Formanek}.

\begin{theorem} [Procesi-Razmyslov] \label{PR}
Let $\kar K = 0.$ Then the ring $S(n,m)$ is generated by invariants of degree $\leq n^2$.
\end{theorem}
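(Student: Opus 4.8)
The plan is to combine the First Fundamental Theorem (FFT) for matrix invariants with degree reductions coming from the Cayley--Hamilton theorem.

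First I would show that $S(n,m)$ is generated, as a $\K$-algebra, by the trace monomials $\trace(X_{i_1}X_{i_2}\cdots X_{i_k})$, where $X_1,\dots,X_m$ denote the generic $n\times n$ matrices (matrices whose entries are independent indeterminates). Since $\kar\K=0$, a homogeneous invariant of degree $d$ is recovered by restitution from its full polarization, which is a multilinear $\GL_n$-invariant of $d$ matrices under simultaneous conjugation; so it suffices to describe multilinear invariants of $\M_{n,n}^{d}$. Writing $\M_{n,n}=V\otimes V^{*}$ with $V=\K^{n}$, such an invariant corresponds to an element of $\big(V^{\otimes d}\otimes(V^{*})^{\otimes d}\big)^{\GL_n}$, and by Weyl's FFT for $\GL(V)$ this space is spanned by the total contractions, one for each permutation $\sigma$ of $\{1,\dots,d\}$. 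Reading a contraction back through $\M_{n,n}=V\otimes V^{*}$, the contraction attached to $\sigma$ is exactly $(X_1,\dots,X_d)\mapsto\prod_{c}\trace(X_{c_1}X_{c_2}\cdots)$, the product over the cycles $c$ of $\sigma$; restituting, every homogeneous invariant of degree $d$ lies in the subalgebra generated by trace monomials.

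Next I would bound the degrees of the trace monomials actually needed. For a single matrix, Cayley--Hamilton together with Newton's identities (invertible since $\kar\K=0$) writes $\trace(A^{\ell})$ for $\ell>n$ as a polynomial in $\trace(A),\dots,\trace(A^{n})$. For words in several generic matrices I would invoke the complete polarization of Cayley--Hamilton, the \emph{fundamental trace identity}: a multilinear identity of degree $n+1$, valid for all $A_1,\dots,A_{n+1}\in\M_{n,n}$, expressing a signed sum of the matrix products $A_{\sigma(1)}\cdots A_{\sigma(n+1)}$ in terms of products in which one or more factors have been absorbed into traces. Substituting sub-words of a long word into this identity and taking traces rewrites a trace monomial of large degree as a combination of trace monomials of smaller degree together with reorderings of the same degree; organized carefully, this shows that a trace monomial of degree exceeding $n^{2}$ already lies in the subalgebra generated by those of degree at most $n^{2}$. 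The tidiest way to run the bookkeeping is via the trace ring $\mathcal{T}(n,m)\subseteq\M_{n,n}\big(\K[x_{ij}^{(k)}]\big)$ generated by the $X_i$ and all their traces: a refined Cayley--Hamilton argument shows $\mathcal{T}(n,m)$ is spanned as a module over $S(n,m)$ by the monomials in the $X_i$ of degree $<n^{2}$, which is where the exponent $n^{2}=\dim\M_{n,n}$ enters, and combined with the FFT this yields the stated bound.

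The main obstacle is pinning the constant down to exactly $n^{2}$. The single-variable Cayley--Hamilton reduction only lowers the degree of \emph{powers}, whereas a word like $X_1X_2X_1X_2\cdots$ has no repeated factor to exploit, so one genuinely needs the multilinear fundamental trace identity, plus a term-by-term analysis of its iterated use to confirm that degrees above $n^{2}$ --- and not merely above some larger polynomial in $n$ --- can always be eliminated. It is also worth noting that both halves of the argument are specific to characteristic $0$: polarization and restitution rely on the averaging supplied by reductivity over $\Q$, and Newton's identities cannot be inverted in small positive characteristic --- exactly the failures that force the good-filtration approach developed in the rest of the paper.
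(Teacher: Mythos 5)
The paper does not prove this statement at all --- it quotes it from Procesi, Razmyslov and Formanek --- so the only question is whether your reconstruction of the classical argument stands on its own. The first half does: polarization/restitution in characteristic $0$ plus Weyl's FFT for $\GL_n$ applied to $(V^{\otimes d}\otimes (V^*)^{\otimes d})^{\GL_n}$, with permutations yielding products of traces over cycles, is exactly Procesi's proof that $S(n,m)$ is generated by trace monomials. The second half, however, has a genuine gap at precisely the point the theorem is about. The assertion that substituting subwords into the fundamental trace identity, ``organized carefully,'' eliminates all trace monomials of degree above $n^2$ is the whole content of the theorem, and it does not follow from routine bookkeeping: the reduction you describe is equivalent to bounding the constant the paper calls $C_{n,m,K}$, i.e.\ the Nagata--Higman nilpotency degree of the free algebra with identity $x^n=0$, and the straightforward inductive use of the polarized Cayley--Hamilton identity (Higman's argument) only gives the exponential bound $2^n-1$. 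Getting $n^2$ is Razmyslov's separate, substantially harder theorem, which your sketch invokes implicitly but never proves --- as you yourself concede in the last paragraph when you call pinning down the constant ``the main obstacle.''

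Two smaller points. The claim that the trace ring $\mathcal{T}(n,m)$ is spanned over $S(n,m)$ by monomials of degree $<n^2$ is not an independent ``tidier'' route: it is essentially a restatement of the same nilpotency bound, so it cannot be used to supply the missing step. And the aside that the exponent arises because $n^2=\dim \M_{n,n}$ is a misattribution: in the classical proof the $n^2$ comes from Razmyslov's bound on the nilpotency degree (conjecturally the true value is $n(n+1)/2$, which is not $\dim\M_{n,n}$), not from the dimension of matrix space. Your closing remarks on why both halves fail in positive characteristic are accurate and consistent with the paper's motivation for its good-filtration approach. To complete the proof you would either have to reproduce Razmyslov's argument bounding $C_{n,m,K}$ by $n^2$ in characteristic $0$, or cite it explicitly as the paper does.
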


In $2002$, explicit bounds in positive characteristic were shown by Domokos in \cite{Domokos2}. See also \cite{DKZ,Lopatin}.

\begin{theorem} [Domokos]
The ring $S(n,m)$ is generated by invariants of degree $O(n^7m^n)$
\end{theorem}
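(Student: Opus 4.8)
The plan is to reduce, by way of a first fundamental theorem, to a bound on the length of the ``words'' appearing in a generating set, and then to extract that length bound from a characteristic-free reduction of Cayley--Hamilton/Shirshov type. \emph{Reduction to a word-length bound.} By the first fundamental theorem for matrix invariants valid in arbitrary characteristic (Donkin, building on the characteristic-zero theory of Procesi), $S(n,m)$ is generated by the coefficients $\sigma_1(w),\dots,\sigma_n(w)$ of the characteristic polynomial of $w$, where $w$ runs over all words $X_{i_1}\cdots X_{i_\ell}$ in the generic $n\times n$ matrices $X_1,\dots,X_m$. Since $\sigma_t(w)$ is homogeneous of degree $t\cdot\ell\le n\ell$ in the matrix entries, it suffices to show that one may restrict to words of length $\ell\le L$ for some $L=O(n^{6}m^{n})$; this yields $\beta(S(n,m))\le nL=O(n^{7}m^{n})$.

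\emph{The length bound.} The basic move is the characteristic-free Cayley--Hamilton identity: over any commutative ring an $n\times n$ matrix $A$ satisfies $A^{n}=\sum_{t=1}^{n}(-1)^{t+1}\sigma_t(A)\,A^{n-t}$. Specializing $A$ to a word, together with the polarizations of this identity and the fundamental relations among the $\sigma_t(w)$ (Procesi's trace identities in characteristic $0$, and their characteristic-free analogues due to Zubkov and Donkin), one can rewrite the invariant attached to a long word as a polynomial in the invariants attached to strictly shorter words. The difficulty is that a single application only shortens a word locally, so iterating it uniformly over all words requires the combinatorics of Shirshov's height theorem: since $M_n$ satisfies the standard polynomial identity $s_{2n}$ of degree $2n$ (Amitsur--Levitzki) and no identity of lower degree, the algebra generated by the generic matrices has finite Shirshov height, and hence $S(n,m)$ is generated by the $\sigma_t$ of words assembled from a bounded number of short pieces. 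Feeding the known bound on the Shirshov height, as a function of the $\mathrm{PI}$-degree $2n$ and the number $m$ of generators, into this count produces a length bound of the shape $L\le c\,n^{6}m^{n}$; the exponential factor $m^{n}$ is precisely the ``number of short words'' entering the estimate.

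\emph{Main obstacle.} Step two is the crux. In characteristic $0$ one has the ordinary trace, the linearity of trace monomials and the Reynolds operator, which turn the reduction into essentially linear algebra and give the clean Procesi--Razmyslov bound $n^{2}$; in characteristic $p$ none of this is available, one is forced to work with the genuinely nonlinear functions $\sigma_t$ and with the full combinatorial strength of Shirshov's theorem, and the best available height bounds make the word length — hence the degree — exponential in $n$. An alternative, more geometric, route would be to prove directly that the null cone $\mathcal N(n,m)\subseteq\M_{n,n}^{m}$ is cut out set-theoretically by the $\sigma_t$ of short words and then to invoke a characteristic-free Hilbert-type estimate converting a null-cone degree bound into a generation bound; this is the approach that, combined with the theory of good filtrations, underlies the polynomial improvements obtained in the rest of the paper, but there the burden is shifted onto establishing a strong bound for the null cone itself.
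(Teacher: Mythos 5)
Your proposal follows essentially the same route as the proof the paper cites for this theorem: Donkin's characteristic-free first fundamental theorem reduces the problem to bounding the length of the words whose characteristic coefficients $\sigma_t(w)$ are needed, and that length bound is precisely the constant $C_{n,m,K}$ defined in the paper, whose known exponential bounds (of order $n^6 m^n$) yield $\beta(S(n,m)) = O(n^7 m^n)$ exactly as in your arithmetic. The only imprecision is in sourcing: the $n^6m^n$ input is an explicit bound on the nilpotency degree $C_{n,m,K}$ (Klein, later improved by Lopatin), combined with the Zubkov--Domokos reduction showing that $\sigma_t(w)$ decomposes once $w$ lies in the T-ideal generated by $x_1^n$, rather than a direct consequence of Amitsur--Levitzki plus generic Shirshov-height estimates.
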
 

Let $C_{n,m,K}$ denote the smallest integer $d$ such that all monomials of degree $d$ in the free non-unital associative algebra $K\langle x_1,x_2,\dots,x_m\rangle$ are contained in the T-ideal generated by $x_1^n$. The above results relied on finding bounds for the constant $C_{n,m,K}$, see \cite{Domokos,Domokos2,Lopatin}. 

Using radically different methods, we obtain bounds for $\beta(S(n,m))$ that are in fact polynomial in $n$ and $m$.

\begin{theorem} \label{mi.bds}
The ring $S(n,m)$ is generated by invariants of degree $\leq (m+1)n^4$.
\end{theorem}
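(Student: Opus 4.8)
The plan is to derive the bound from the polynomial degree bound on matrix semi-invariants (the bound on $\beta(R(n,m))$) proved earlier, by exhibiting $S(n,m)$ as a degree-non-increasing quotient of a semi-invariant ring of a two-vertex quiver.

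First I would set up the reduction. Let $Q'$ be the quiver with vertices $1,2$, arrows $a_1,\dots,a_m\colon 1\to 2$ and one further arrow $b\colon 2\to 1$, with dimension vector $(n,n)$; a representation is a tuple $(A_1,\dots,A_m,B)$ of $n\times n$ matrices, and $(g_1,g_2)\in\GL_n\times\GL_n$ acts by $(A_i,B)\mapsto(g_2A_ig_1^{-1},\,g_1Bg_2^{-1})$. The subvariety $\{B=I_n\}$ is stable under the diagonal $\GL_n\hookrightarrow\GL_n\times\GL_n$, which acts on it by simultaneous conjugation of the $A_i$, and it meets every orbit on which $B$ is invertible; hence restriction to $\{B=I_n\}$ is a ring homomorphism $\SI(Q',(n,n))\to K[\M_{n,n}^m]^{\GL_n}=S(n,m)$ that does not raise degree. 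It is surjective because it already hits a generating set: since $(g_1,g_2)$ sends $BA_i$ to $g_1(BA_i)g_1^{-1}$, each $\sigma_j\bigl((BA_{i_l})\cdots(BA_{i_1})\bigr)$ lies in $\SI(Q',(n,n))$ and restricts to $\sigma_j(A_{i_l}\cdots A_{i_1})$, and by Donkin's first fundamental theorem these characteristic-polynomial coefficients of words generate $S(n,m)$. Therefore $\beta(S(n,m))\le\beta(\SI(Q',(n,n)))$.

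Next I would bound $\beta(\SI(Q',(n,n)))$. Because $Q'$ has the same underlying graph as the generalized Kronecker quiver with $m+1$ arrows, the standard reductions for semi-invariants of quivers — reversal of the arrow $b$, or the general-quiver degree bounds established in this paper — express a generating set of $\SI(Q',(n,n))$ through a generating set of the matrix semi-invariant ring $R(n,m+1)$ with at most a constant-factor loss of degree; this is the step at which the number of generic matrices passes from $m$ to $m+1$. Finally, inserting the polynomial bound on $\beta(R(n,\cdot))$ with $m+1$ arrows and keeping track of the degrees through the two reductions — restriction is degree-non-increasing, and passing from a cyclic word of length $l$ to the degree $jl\le nl$ of its $j$-th characteristic-polynomial coefficient costs a factor at most $n$ — collects to the bound $(m+1)n^4$.

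The only genuinely hard ingredient is the polynomial bound on $\beta(R(n,m))$ in arbitrary characteristic, which rests on the theory of good filtrations together with the recent description of the null cone for matrix semi-invariants; in this argument it is used as a black box. All the remaining work is in the two reductions, and the one place that requires care in characteristic $p$ is the use of Donkin's first fundamental theorem (coefficients of characteristic polynomials of words generate $S(n,m)$) in place of the characteristic-zero statement that traces suffice, together with checking that the arrow-inversion and the graph manipulation are valid over $\K$ of any characteristic.
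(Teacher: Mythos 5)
Your first step is sound: on the two-vertex quiver $Q'$ with arrows $a_1,\dots,a_m\colon 1\to 2$ and $b\colon 2\to 1$, the slice $\{B=I_n\}$ is stable under the diagonal $\GL_n$, every weight occurring in $\SI(Q',(n,n))$ satisfies $\sigma(1)+\sigma(2)=0$, so restriction lands in $S(n,m)$, and Donkin's generators $\sigma_j(A_{i_1}\cdots A_{i_k})$ are hit by the cycle invariants $\sigma_j\bigl((BA_{i_1})\cdots(BA_{i_k})\bigr)$; hence $\beta(S(n,m))\le\beta(\SI(Q',(n,n)))$. The genuine gap is the next step, where you need a degree bound for $\SI(Q',(n,n))$ and claim it follows from $\beta(R(n,m+1))$ by ``reversal of the arrow $b$, or the general-quiver degree bounds established in this paper.'' Neither tool applies: $Q'$ has an oriented cycle, and the paper's quiver results (Corollary~\ref{si1}, Theorem~\ref{si2}) explicitly assume no oriented cycles; arrow reversal (reflection functors) is only available at a sink or source, and in $Q'$ neither vertex is one. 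The representations $\Hom(V_1,V_2)^{\oplus m}\oplus\Hom(V_2,V_1)$ and $\Hom(V_1,V_2)^{\oplus(m+1)}$ of $\SL_n\times\SL_n$ are not related by an automorphism of the group for $n\ge 3$, so there is no a priori comparison of their invariant rings, and nothing in the paper bounds the null cone or $\beta$ for the former. Finally, even granting such a comparison ``up to a constant factor'' plus your extra ``factor at most $n$,'' the bookkeeping would not return $(m+1)n^4$; the clean constant only comes out if no losses are incurred.

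The paper's route avoids this entirely: keep all $m+1$ arrows parallel, i.e.\ work with $R(n,m+1)=K[\M_{n,n}^{m+1}]^{\SL_n\times\SL_n}$, and restrict to the slice $X_0=I_n$. That restriction is degree-non-increasing and, by a result of Domokos (Proposition~\ref{msi.mi}), surjects onto $S(n,m)$, giving $\beta(S(n,m))\le\beta(R(n,m+1))\le(m+1)n^4$ by Theorem~\ref{msi}. Note where the difficulty sits in each version: with the reversed arrow, surjectivity onto $S(n,m)$ is easy (your Donkin argument) but the degree bound for $\SI(Q',(n,n))$ is missing; with parallel arrows, the degree bound is exactly Theorem~\ref{msi}, but surjectivity of the specialization $X_0\mapsto I_n$ on semi-invariants is the nontrivial input, which is precisely the content of Domokos's proposition and cannot be obtained just by restricting the cycle invariants, since expressions like $\sigma_j(X_0^{-1}X_{i_1}\cdots)$ are not polynomial. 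To repair your proof you should either quote Domokos's surjection $R(n,m+1)\twoheadrightarrow S(n,m)$, or supply an independent bound for $\beta(\SI(Q',(n,n)))$, which the present paper's machinery does not provide.
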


 Given our result, and the intimate relationship between $\beta(S(n,m))$ and $C_{n,m,K}$, we make the following conjecture:

\begin{conjecture}
There is a polynomial in two variables $f(n,m)$ such that $C_{n,m,K} = O(f(n,m)).$ 
\end{conjecture}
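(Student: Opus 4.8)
The plan is to deduce the bound for matrix invariants from a polynomial degree bound for matrix semi-invariants, exactly as the abstract advertises; the real content lies in the semi-invariant bound in arbitrary characteristic, where good filtrations do the work. I would first reduce to semi-invariants. Writing $S(n,m)=K[\M_{n,n}^m]^{\GL_n}$ for the conjugation action, adjoin one generic matrix $Y$ and let $\GL_n\times\GL_n$ act on $\M_{n,n}^m\times\GL_n$ by $(g,h)\cdot(X_1,\dots,X_m,Y)=(gX_1h^{-1},\dots,gX_mh^{-1},gYh^{-1})$. Slicing $Y$ to the identity, whose stabilizer in $\GL_n\times\GL_n$ is the diagonal copy of $\GL_n$, identifies $K[\M_{n,n}^m\times\GL_n]^{\GL_n\times\GL_n}$ with $S(n,m)$. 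Since $\M_{n,n}^m\times\GL_n$ is the open locus $\{\det X_{m+1}\neq 0\}$ in $\M_{n,n}^{m+1}$, and since $\GL_n\times\GL_n$-invariance amounts to $\SL_n\times\SL_n$-invariance together with invariance under the scaling action of the central torus, this presents $S(n,m)$ as the degree-zero subring of the graded localization $R(n,m+1)[\det(X_{m+1})^{-1}]$, where $R(n,m+1)=K[\M_{n,n}^{m+1}]^{\SL_n\times\SL_n}$ is the ring of matrix semi-invariants and $\det(X_{m+1})$ sits in degree $n$. Standard bookkeeping for graded localizations then bounds $\beta(S(n,m))$ in terms of $\beta(R(n,m+1))$, so it suffices to bound the latter polynomially and to keep track of constants; the ``$m+1$'' in the statement is exactly the extra matrix $Y$.

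Next, the semi-invariant bound. I would invoke the recent, characteristic-free description of the null cone of $\M_{n,n}^{m'}$ under $\SL_n\times\SL_n$ via non-commutative rank: it is the common zero locus of the determinantal semi-invariants $\det\!\big(\sum_i C_i\otimes X_i\big)$ with the $C_i$ of size at most $n-1$, and by the first fundamental theorem for matrix semi-invariants (valid in any characteristic) these determinants already span $R(n,m')$. Hence there is a $\sigma=O(n^2)$ such that the semi-invariants of degree $\le\sigma$ cut out the null cone set-theoretically. In characteristic $0$ one finishes with Derksen's argument: from invariants of degree $\le\sigma$ cutting out the null cone, the Reynolds operator produces a homogeneous system of parameters for the invariant ring in degrees $O(\sigma)$; Hochster--Roberts (the invariant ring is Cohen--Macaulay) then bounds the degrees of a module-generating set over the parameter subring, giving $\beta=O(\dim V\cdot\sigma)$.

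In positive characteristic there is no Reynolds operator and the invariant ring need not be Cohen--Macaulay, so one substitutes good filtrations. The ring $K[\M_{n,n}^{m'}]=\Sym\big(\bigoplus^{m'}V_1^{*}\otimes V_2\big)$ has a good filtration as a $\GL(V_1)\times\GL(V_2)$-module — the summands are costandard, and by the theorem of Mathieu and Donkin symmetric algebras of modules with good filtrations again have good filtrations — and this survives restriction to $\SL_n\times\SL_n$. By Hashimoto's theorem the invariant ring $R(n,m')$ is then strongly $F$-regular, in particular Cohen--Macaulay, and the inclusion $R(n,m')\hookrightarrow K[\M_{n,n}^{m'}]$ is pure, which pins down the indecomposable invariants as the image of the ideal of the null cone. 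With these two substitutes for the Reynolds operator and for Hochster--Roberts, the remaining steps of Derksen's argument are characteristic-free commutative algebra and yield $\beta(R(n,m'))=O(\dim\M_{n,n}^{m'}\cdot n^2)=O(m'n^4)$. Taking $m'=m+1$ and feeding this through the reduction described above collapses the constants to give generators of $S(n,m)$ in degree $\le(m+1)n^4$.

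The main obstacle is precisely this passage from the null cone to the degree bound: the implication ``null cone cut out in low degree $\Rightarrow$ invariant ring generated in low degree'' is classical in characteristic $0$ but rests essentially on complete reducibility, and making it work in characteristic $p$ is exactly what forces the detour through good filtrations — one must prove that $K[\M_{n,n}^{m'}]$ has a good filtration and then mine from it the structural facts (Cohen--Macaulayness of the invariant ring, purity of the invariant inclusion, and hence control of the indecomposable invariants) that in characteristic $0$ come for free from the Reynolds operator. The reduction to semi-invariants and the null-cone input are, by comparison, a standard slice argument and a citation of recent results.
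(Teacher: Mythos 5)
There is a genuine gap here, and it is not a technical one: you have written a (reasonable) sketch of the proof of the degree bound $\beta(S(n,m))\le (m+1)n^4$, i.e.\ of Theorem~\ref{mi.bds}, but the statement you were asked to prove is Conjecture~1.5, which concerns the constant $C_{n,m,K}$ --- the least degree $d$ beyond which every monomial of $K\langle x_1,\dots,x_m\rangle$ lies in the T-ideal generated by $x_1^n$. Your proposal never mentions $C_{n,m,K}$ or the T-ideal of $x_1^n$ at all. The known relationship between the two quantities runs in the opposite direction from what you would need: Domokos's positive-characteristic bound for $\beta(S(n,m))$ is obtained \emph{from} a bound on $C_{n,m,K}$ (monomials in the T-ideal of $x_1^n$ yield decomposable characteristic-coefficient invariants among Donkin's generators), whereas no implication ``$\beta(S(n,m))$ polynomial $\Rightarrow$ $C_{n,m,K}$ polynomial'' is available. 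That is precisely why the paper, after proving the polynomial bound on $\beta(S(n,m))$ by the null-cone/good-filtration method you describe, states the polynomial behavior of $C_{n,m,K}$ only as a conjecture, with no proof offered. To actually establish the statement you would need a converse reduction translating low-degree generation of $S(n,m)$ into membership of high-degree monomials in the T-ideal of $x_1^n$, and no such argument appears in your proposal (or in the paper).

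As a side remark, the part you did prove follows essentially the paper's own route for Theorem~\ref{msi} and Theorem~\ref{mi.bds}: Domokos's surjection $R(n,m+1)\twoheadrightarrow S(n,m)$ obtained by specializing the extra matrix to the identity, the null cone of $\M_{n,n}^{m'}$ cut out by $r\le m'n^2$ semi-invariants of degree $n(n-1)$ from \cite{DM}, good filtrations of $K[\Rep(Q,\alpha)]$ giving Cohen--Macaulayness via Hashimoto's theorem, and the characteristic-independence of the Hilbert series replacing the Reynolds-operator arguments. But none of this touches the conjecture itself.
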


Theorem~\ref{PR} was generalized to invariants of quivers by Le~Bruyn and Procesi in \cite{LP}. We give a similar generalization using the description of invariants given by Donkin in positive characteristic.

\begin{corollary} \label{inv.bds}
The ring $\I(Q,\alpha)$ is generated by invariants of degree $(M + 1)N^4$, where $M = |Q_1|$ and $N = \sum_{i \in Q_0} \alpha_i$.
\end{corollary}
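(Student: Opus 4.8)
The plan is to reduce the statement about $\I(Q,\alpha)$ to the already-established bound for matrix invariants in Theorem~\ref{mi.bds}, following the strategy of Le~Bruyn--Procesi but replacing their characteristic-zero input (trace identities) with Donkin's description of quiver invariants valid in arbitrary characteristic. First I would recall that by Donkin's theorem the invariant ring $\I(Q,\alpha)$ is generated by the functions $V \mapsto \sigma_j(V(a_\ell)V(a_{\ell-1})\cdots V(a_1))$, where $a_1 a_2 \cdots a_\ell$ runs over oriented cycles in $Q$ (so the composite is an endomorphism of $V(ta_1)$) and $\sigma_j$ denotes the $j$-th coefficient of the characteristic polynomial, with $1 \le j \le \alpha_{ta_1}$. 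The function attached to an oriented cycle has degree $\ell \cdot j$ in the coordinates, and $j \le \alpha_{ta_1} \le N$, so it suffices to bound the length $\ell$ of cycles that must be used: we want to show every such generator of length $> M N^3$ (or an appropriate polynomial threshold making the total degree at most $(M+1)N^4$) is a polynomial in generators of smaller degree.

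The key step is a standard trick: pass from $Q$ and $\alpha$ to the $M$-loop quiver with dimension vector $N$. Concretely, embed each $V(i)$ as a coordinate subspace of a single $N$-dimensional space $W = \bigoplus_{i \in Q_0} V(i)$, and for each arrow $a \in Q_1$ regard $V(a) : V(ta) \to V(ha)$ as an $N \times N$ matrix $X_a$ supported on the corresponding block (zero elsewhere). This gives a $\GL(\alpha)$-equivariant closed embedding $\Rep(Q,\alpha) \hookrightarrow \M_{N,N}^M$, where $\GL(\alpha) \hookrightarrow \GL_N$ block-diagonally. Under this embedding, the Donkin generator attached to a cycle $a_1 \cdots a_\ell$ is (up to relabeling, and using that the block structure forces the product to vanish unless the arrows compose correctly) the restriction of the matrix invariant $\sigma_j(X_{a_\ell} \cdots X_{a_1})$ of $\M_{N,N}^M$. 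Now apply Theorem~\ref{mi.bds} with $n = N$ and $m = M$: the ring $S(N,M) = K[\M_{N,N}^M]^{\GL_N}$ is generated in degree $\le (M+1)N^4$. Since $\GL(\alpha) \subseteq \GL_N$, restriction of functions sends $\GL_N$-invariants to $\GL(\alpha)$-invariants; one checks that the restriction map $S(N,M) \to \I(Q,\alpha)$ is \emph{surjective} because both rings are spanned by the same kind of cycle-invariants (Donkin on one side, the fundamental theorem on the other), and restriction takes the $\GL_N$-generator of a cycle to the $\GL(\alpha)$-generator of that cycle (and kills products of matrices that do not form an admissible path). Surjectivity of a graded algebra map forces $\beta(\I(Q,\alpha)) \le \beta(S(N,M)) \le (M+1)N^4$.

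I would organize the write-up as: (1) state Donkin's generation result precisely and note the degree of a cycle-generator; (2) construct the equivariant embedding $\Rep(Q,\alpha) \hookrightarrow \M_{N,N}^M$ and the group inclusion $\GL(\alpha)\hookrightarrow \GL_N$; (3) verify that restriction $K[\M_{N,N}^M]\to K[\Rep(Q,\alpha)]$ carries $S(N,M)$ onto $\I(Q,\alpha)$, matching generators; (4) conclude via Theorem~\ref{mi.bds}. The main obstacle is step (3): one must be careful that the restriction map is genuinely surjective and not merely have dense image or land inside a subalgebra — this requires knowing that \emph{every} Donkin generator of $\I(Q,\alpha)$ actually arises as the restriction of some matrix invariant of $\M_{N,N}^M$, which in turn uses that a closed orbit-separating set of invariants downstairs pulls back from upstairs, together with the explicit compatibility of Donkin's $\sigma_j$-of-a-cycle description with the fundamental theorem for $S(N,M)$ in the same characteristic. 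Once that compatibility is pinned down (it is essentially bookkeeping with the block structure: a product $X_{a_\ell}\cdots X_{a_1}$ is block-supported on $V(ta_1)$ exactly when the $a_i$ form a closed path, and is zero otherwise, so its characteristic polynomial coefficients are precisely $\sigma_j$ of the corresponding quiver cycle padded by zeros), the degree bound is immediate.
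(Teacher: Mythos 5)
Your argument is correct and is essentially the paper's own proof: the paper deduces the corollary from the surjection $S(N,M) \twoheadrightarrow \I(Q,\alpha)$ of Corollary~\ref{mi.iq}, which is exactly your block-diagonal embedding $\Rep(Q,\alpha) \hookrightarrow \Hom(V,V)^{\oplus M}$ together with Donkin's two generation theorems, combined with the bound $\beta(S(N,M)) \leq (M+1)N^4$ from Theorem~\ref{mi.bds}. Your extra care in step (3) about matching Donkin's cycle generators with restrictions of the $\sigma_j$ of matrix products is precisely the content the paper summarizes in Corollary~\ref{mi.iq}, so no new ideas are needed beyond what you wrote.
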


\subsection{Semi-invariants of quivers}

The invariant ring $\SI(Q,\alpha)=K[\Rep(Q,\alpha)]^{\SL(\alpha)}$ is called the ring of semi-invariants. 

A multiplicative character of the group $\GL_\alpha$ is of the form
$$
\chi_\sigma:(A(i)\mid i\in Q_0)\in \GL_\alpha\mapsto \prod_{i\in Q_0}\det(A(i))^{\sigma(i)}\in K^\star,
$$
where $\sigma:Q_0\to \Z$ is called the weight of the character $\chi_\sigma$. Define 
$$\SI(Q,\alpha)_\sigma=\{f\in K[\Rep(Q,\alpha)]\mid \forall A\in \GL(\alpha),\ A\cdot f=\chi_\sigma(A) f\}.$$
Then we have $\SI(Q,\alpha)=\bigoplus_{\sigma}\SI(Q,\alpha)_\sigma$. 

For a given weight $\sigma$, we can consider the subring $\SI(Q,\alpha,\sigma)=\bigoplus_{d=0}^\infty \SI(Q,\alpha)_{d\sigma}$. For any weight $\sigma$, the projective variety $\Proj(\SI(Q,\alpha,\sigma))$, if nonempty,  is a moduli space for the $\alpha$-dimensional representations of the quiver $Q$. See \cite{King} for more details. 

For the $m$-Kronecker quiver, i.e, a quiver a with two vertices $x$ and $y$ and $m$ arrows going from $x$ to $y$ and dimension vector $\alpha = (n,n)$,  we have $\Rep(Q,\alpha) = \M_{n,n}^m$ and $\SL(\alpha) = \SL_n \times \SL_n$. The ring of semi-invariants $R(n,m) := K[\M_{n,n}^m]^{\SL_n \times \SL_n}$ is known as the ring of matrix semi-invariants. 

\begin{theorem} \label{msi}
 The ring $R(n,m)$ is generated by invariants of degree $\leq mn^4$.
\end{theorem}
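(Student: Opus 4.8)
The plan is to derive the bound from a homogeneous system of parameters of degree $O(n^{2})$ together with the Cohen--Macaulayness of $R(n,m)$, reading off the degrees of generators from the Hilbert series. Two structural inputs, both consequences of the theory of good filtrations (and both used here as black boxes), make this possible. First, by Donkin's work $R(n,m)$ is spanned by the determinantal semi-invariants $\det\bigl(\sum_{i=1}^{m}X_i\otimes T_i\bigr)$, where $(T_1,\dots,T_m)$ ranges over $m$-tuples of $d\times d$ matrices; more importantly, $R(n,m)$ is a Cohen--Macaulay domain whose Hilbert series agrees with the one in characteristic zero, so in particular its $a$-invariant is $\le 0$. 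Second, the null cone of $\M_{n,n}^{m}$ under $\SL_n\times\SL_n$ is cut out by these determinantal semi-invariants with $T_i$ of size at most $n$, hence by semi-invariants of degree at most $n^{2}$; from this, Proposition~\ref{h.s.o.p} produces a homogeneous system of parameters $\theta_1,\dots,\theta_k$ for $R(n,m)$, with $k=\dim R(n,m)$ and $\deg\theta_j\le n^{2}$ for all $j$.

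Granting these inputs, the argument is a short Hilbert-series computation. Being Cohen--Macaulay, $R(n,m)$ is a finitely generated \emph{free} module over $A:=\K[\theta_1,\dots,\theta_k]$, say $R(n,m)=\bigoplus_{i=1}^{s}A\,\eta_i$ with $\eta_1=1$ and the $\eta_i$ homogeneous of degrees $0=\delta_1\le\cdots\le\delta_s$. Comparing Hilbert series gives
\[
H_{R(n,m)}(t)=\frac{\sum_{i=1}^{s}t^{\delta_i}}{\prod_{j=1}^{k}\bigl(1-t^{\deg\theta_j}\bigr)},
\qquad\text{hence}\qquad
\delta_s=a\bigl(R(n,m)\bigr)+\sum_{j=1}^{k}\deg\theta_j\ \le\ \sum_{j=1}^{k}\deg\theta_j .
\]
Since $\theta_1,\dots,\theta_k,\eta_1,\dots,\eta_s$ generate $R(n,m)$ as a $\K$-algebra, every generator has degree at most $\max\bigl(\max_j\deg\theta_j,\ \delta_s\bigr)\le\sum_{j=1}^{k}\deg\theta_j\le k\,n^{2}$. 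It remains to bound $k\,n^{2}$: for $m\ge 3$ the generic $\SL_n\times\SL_n$-stabilizer on $\M_{n,n}^{m}$ is finite, so $k=\dim R(n,m)=mn^{2}-2(n^{2}-1)=(m-2)n^{2}+2$ and $k\,n^{2}=(m-2)n^{4}+2n^{2}\le mn^{4}$; the cases $m\le 2$, where $\dim R(n,m)$ is even smaller, follow a fortiori. Thus $R(n,m)$ is generated in degrees $\le mn^{4}$.

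I expect the real difficulty to be entirely in the two black boxes invoked above. Establishing that $R(n,m)$ is Cohen--Macaulay with a characteristic-independent Hilbert series is precisely where good filtrations are indispensable, Hochster--Roberts being unavailable in positive characteristic; and constructing a homogeneous system of parameters of degree $O(n^{2})$ rests on the sharp, polynomial-degree description of the null cone of matrix semi-invariants. Given these, the only delicate point inside the present proof is ensuring that the $a$-invariant does not spoil the estimate --- i.e.\ that $\delta_s\le\sum_j\deg\theta_j$, which is exactly $a(R(n,m))\le 0$ --- after which the conclusion is a mechanical computation with the Hilbert series and the dimension $k$.
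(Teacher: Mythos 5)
Your proposal is correct and takes essentially the same route as the paper: the same three inputs (Cohen--Macaulayness via good filtrations/Hashimoto, the characteristic-independent Hilbert series of nonpositive degree via Kempf, and the Derksen--Makam null-cone bound), with your hsop/free-module Hilbert-series computation simply unpacking the paper's Proposition~\ref{deg.bds.pos}, which is applied to the $r=\dim R(n,m)$ invariants of the single degree $n(n-1)$ (these themselves serve as the system of parameters, which also sidesteps the mixed-degree issue hidden in your ``degree at most $n^2$'' formulation). Your generic-stabilizer computation of $\dim R(n,m)$ is unnecessary, since the trivial bound $\dim R(n,m)\le mn^2$ already gives $kn^2\le mn^4$, exactly as in the paper.
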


The ring $\SI(Q,\alpha,\sigma)$ is graded where $\SI(Q,\alpha)_{d\sigma}$ is the degree $d$ part. Define $|\sigma|_{\alpha} = \frac{1}{2} |\sigma| \cdot~\alpha = \frac{1}{2} \sum_{i \in Q_0} |\sigma(i)| \alpha_i$

\begin{corollary} \label{si1}
Let $Q$ be a quiver with no oriented cycles. Then the ring $\SI(Q,\alpha,\sigma)$ is generated by invariants of degree $\leq mn^3$, where $n = |\sigma|_{\alpha}$ and $m=\sum_{x\in Q_0}\sum_{y\in Q_0} \sigma_+(x)b_{x,y}\sigma_-(y)$ where $b_{x,y}$ is the number of paths from $x$ to $y$.
\end{corollary}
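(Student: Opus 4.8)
The plan is to reduce $\SI(Q,\alpha,\sigma)$ to a ring of matrix semi-invariants and then quote Theorem~\ref{msi}, being careful about how the grading transforms under the reduction. We may assume $\langle\sigma,\alpha\rangle:=\sum_{i\in Q_0}\sigma(i)\alpha_i=0$: otherwise, evaluating the character $\chi_{d\sigma}$ on the central one-parameter subgroup $t\mapsto(t\cdot\mathrm{Id}_{V(i)})_{i\in Q_0}$ forces $\SI(Q,\alpha)_{d\sigma}=0$ for all $d>0$, so $\SI(Q,\alpha,\sigma)=K$ and there is nothing to prove. Let $Q_0^{+},Q_0^{-},Q_0^{0}$ be the vertices on which $\sigma$ is positive, negative, zero, so that $\sigma_+(x)=\sigma(x)$ for $x\in Q_0^{+}$ and $\sigma_-(y)=-\sigma(y)$ for $y\in Q_0^{-}$; since $Q$ has no oriented cycles, the number $b_{x,y}$ of paths from $x$ to $y$ is finite, and for such a path $p$ and $V\in\Rep(Q,\alpha)$ we write $V(p)\colon V(x)\to V(y)$ for the composite of the maps of $V$ along $p$. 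Set $P=\bigoplus_{x\in Q_0^{+}}V(x)^{\oplus\sigma(x)}$ and $N=\bigoplus_{y\in Q_0^{-}}V(y)^{\oplus(-\sigma(y))}$; then $\dim P-\dim N=\langle\sigma,\alpha\rangle=0$, so $\dim P=\dim N=|\sigma|_\alpha=n$. There are exactly $m=\sum_{x\in Q_0}\sum_{y\in Q_0}\sigma_+(x)b_{x,y}\sigma_-(y)$ triples $(p,i,j)$ consisting of a path $p$ from some $x\in Q_0^{+}$ to some $y\in Q_0^{-}$ together with indices $1\le i\le\sigma(x)$ and $1\le j\le-\sigma(y)$; these define a morphism
$$\Phi\colon\Rep(Q,\alpha)\longrightarrow\M_{n,n}^{m},$$
where, viewing $\M_{n,n}=\Hom(P,N)$, the matrix attached to $(p,i,j)$ has $V(p)$ in the block joining the $i$-th copy of $V(x)$ in $P$ to the $j$-th copy of $V(y)$ in $N$ and zero in every other block. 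Because the maps of $V$ at the vertices of $Q_0^{0}$ occur only inside the composites $V(p)$, the map $\Phi$ is equivariant for the homomorphism $\rho\colon\GL(\alpha)\to\GL_n\times\GL_n$ induced by the (block-diagonal) actions on $N$ and $P$ — with the convention, as for the Kronecker quiver, that the sink group $\GL(N)$ acts on the left and the source group $\GL(P)$ on the right — and $\rho$ carries $\SL(\alpha)$ into $\SL_n\times\SL_n$, since the image of $A_i$ has determinant $\det(A_i)^{\pm\sigma(i)}$. Pulling back along $\Phi$ therefore gives an algebra map $R(n,m)=K[\M_{n,n}^m]^{\SL_n\times\SL_n}\to K[\Rep(Q,\alpha)]^{\SL(\alpha)}$, and a direct computation of characters shows that the polynomial-degree-$na$ component of $R(n,m)$ is mapped into $\SI(Q,\alpha)_{a\sigma}$; since $R(n,m)$ is the direct sum of its $\GL_n\times\GL_n$-weight spaces, each of which is homogeneous of polynomial degree divisible by $n$, re-grading $R(n,m)$ so that its degree-$a$ part is its polynomial-degree-$na$ part yields a graded algebra homomorphism
$$\phi\colon R(n,m)\longrightarrow\SI(Q,\alpha,\sigma).$$

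Next, $\phi$ is surjective. This is where the first fundamental theorem for semi-invariants of quivers without oriented cycles enters: over an arbitrary field, by the work of Donkin and of Domokos--Zubkov (and, in characteristic zero, Derksen--Weyman and Schofield--van den Bergh), $\SI(Q,\alpha,\sigma)$ is spanned by determinantal (Schofield) semi-invariants, and — after the standard step of composing through the $Q_0^{0}$-vertices, which uses the absence of oriented cycles — each such semi-invariant of weight $d\sigma$ is a determinant of a block matrix assembled from the composites $V(p)$, hence equals $\Phi^{*}$ of a determinantal semi-invariant of $\M_{n,n}^{m}$ (a determinant of a block matrix in the coordinate matrices $X_1,\dots,X_m$), and so lies in the image of $\phi$. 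Granting surjectivity, we finish: Theorem~\ref{msi} says $R(n,m)$ is generated by invariants of polynomial degree $\le mn^{4}$, which, since every nonzero homogeneous component of $R(n,m)$ has polynomial degree a multiple of $n$, is precisely the statement that in the re-grading above $R(n,m)$ is generated in degrees $\le mn^{3}$. The image of a generating set under the surjection $\phi$ is a generating set, so $\SI(Q,\alpha,\sigma)$ is generated by invariants of degree $\le mn^{3}$.

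The main obstacle is the surjectivity of $\phi$ — that is, the input that over an arbitrary field $\SI(Q,\alpha,\sigma)$ is spanned by the determinantal semi-invariants that visibly factor through $\Phi$; this rests on the first fundamental theorem in positive characteristic together with the reduction of semi-invariants of a quiver without oriented cycles to the bipartite, hence Kronecker, case. Everything else is bookkeeping: the well-definedness and $\rho$-equivariance of $\Phi$, the identification of the pulled-back $\GL_n\times\GL_n$-characters with the $\chi_{a\sigma}$, the fact that the absence of oriented cycles makes each $b_{x,y}$ (hence $m$ and $\Phi$) finite, and the degree rescaling by $n$ that converts the bound $mn^{4}$ of Theorem~\ref{msi} into $mn^{3}$.
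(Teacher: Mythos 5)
Your reduction---the path-based map $\Phi\colon\Rep(Q,\alpha)\to\M_{n,n}^m$, surjectivity of the pullback via the characteristic-free determinantal description of semi-invariants, and the rescaling $d\mapsto dn$ that turns the bound $mn^4$ of Theorem~\ref{msi} into $mn^3$---is exactly the argument of \cite{DM} that the paper's proof of this corollary invokes (with the characteristic-$0$ ingredients replaced by the characteristic-free ones established here), so your proposal is correct and essentially the same approach. One small attribution fix: the arbitrary-characteristic spanning by determinantal semi-invariants that you need for surjectivity is due to Schofield--van den Bergh \cite{SV} and Domokos--Zubkov \cite{DZ}, not Donkin, whose theorems concern invariants (oriented cycles), not semi-invariants.
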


The ring $\SI(Q,\alpha)$ has a weight space decomposition, and we give our bounds in terms of this weight space decomposition rather than the usual $\Z$-grading by total degree. 

\begin{theorem} \label{si2}
Let $Q$ be a quiver with no oriented cycles, $\alpha \in \Z^{Q_0}$ a dimension vector. Let $r$ denote the Krull dimension of $\SI(Q,\alpha)$, and let $|Q_0| = n$. The ring $\SI(Q,\alpha)$ is generated by semi-invariants of weights $\sigma$ with $$|\sigma|_\alpha \leq   \displaystyle \frac{3rn^2 ||\alpha||_1^{4n}}{128(n-1)^{4n-4}},$$
where $||\alpha||_1 = \sum_{i \in Q_0} |\alpha_i|$.
\end{theorem}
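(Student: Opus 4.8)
The plan is to derive Theorem~\ref{si2} from two ingredients: a bound on the weights of the semi-invariants that cut out the null cone of $\SI(Q,\alpha)$, and the passage from such a set-theoretic bound to an actual generation bound via a homogeneous system of parameters. Since $Q$ has no oriented cycles, I would first order $Q_0$ compatibly with the arrows and, via reflection functors and the Derksen--Weyman reduction, express $\SI(Q,\alpha)$ — and in particular its null cone — in terms of semi-invariants of generalized Kronecker quivers, i.e.\ of the matrix semi-invariant rings $R(n',m')$ handled in Theorem~\ref{msi} and Corollary~\ref{si1}. Along a path through the $n=|Q_0|$ vertices the ``matrix size'' and ``number of matrices'' produced by this reduction are governed by products of the entries $\alpha_i$; an AM--GM estimate bounds such a product by a quantity of the shape $\bigl(\|\alpha\|_1/(n-1)\bigr)^{n-1}$ up to a polynomial factor, and this is the source of the exponents $4n$ and $4n-4$ in the final bound.

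Next I would bound the null cone. A representation $V\in\Rep(Q,\alpha)$ lies outside the null cone of $\SI(Q,\alpha)$ exactly when it is $\sigma$-semistable for some nonzero weight $\sigma$, which by King's criterion is controlled by the dimension vectors of the subrepresentations of $V$. Transferring this to the matrix picture and invoking the null cone bounds for matrix semi-invariants, one obtains a constant $B_0$ — of order $\dfrac{3n^2\|\alpha\|_1^{4n}}{128(n-1)^{4n-4}}$ once the reduction is bookkept, the quadratic dependence and the constant $3/128$ coming from Derksen's method of bounding generators in terms of the null cone — such that the null cone is the common zero locus of the semi-invariants of weight $\sigma$ with $|\sigma|_\alpha\le B_0$.

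It then remains to pass from the null cone to generators. Because these bounded-weight semi-invariants cut out the null cone, $\SI(Q,\alpha)$ is module-finite over the subalgebra they generate, and by prime avoidance over the infinite field $K$ one extracts from them a homogeneous system of parameters $\theta_1,\dots,\theta_r$, where $r$ is the Krull dimension of $\SI(Q,\alpha)$, with every $|\operatorname{wt}\theta_i|_\alpha\le B_0$; this is the content of Proposition~\ref{h.s.o.p}. In arbitrary characteristic the good filtration results of Donkin are needed here, both to see that $\SI(Q,\alpha)$ is Cohen--Macaulay, hence free as a module over $K[\theta_1,\dots,\theta_r]$, and to bound the weights of its module generators by $\sum_i|\operatorname{wt}\theta_i|_\alpha\le rB_0$. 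Since $\SI(Q,\alpha)$ is generated as a $K$-algebra by the $\theta_i$ together with these module generators, it is generated by semi-invariants of weight $\sigma$ with $|\sigma|_\alpha\le rB_0$, which is exactly the claimed inequality.

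The hard part is the null-cone step together with its bookkeeping: establishing the matrix null cone bounds in positive characteristic, and then tracking how the parameters $(n',m')$ depend on $\alpha$ and $|Q_0|$ through the chain of reductions — reflection functors, passage to a bipartite quiver, castling — without losing the polynomial-in-$\|\alpha\|_1$ character of the estimate. The good-filtration machinery must also be strong enough to supply the Cohen--Macaulayness and the module-generator degree bound that make the final step go through when $\SL(\alpha)$ fails to be linearly reductive; conversely, any slack in the matrix null cone bound or in Proposition~\ref{h.s.o.p} propagates directly into the constant and exponents above.
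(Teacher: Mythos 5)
Your proposal takes essentially the same route as the paper: the paper's proof simply reruns the characteristic-zero arguments of \cite{DM,DM2} (reduction of the null-cone question to matrix semi-invariants, a bounded-weight homogeneous system of parameters, Cohen--Macaulayness, and the Hilbert-series degree bound), supplying the characteristic-free inputs via good filtrations --- Hashimoto's theorem for Cohen--Macaulayness and the characteristic-independence of the Hilbert series so that Kempf's non-positivity of its degree still applies --- and replacing Theorem~\ref{D.gen} by Proposition~\ref{deg.bds.pos}. The only minor differences are cosmetic: the paper observes that the null-cone bounds of \cite{DM,DM2} are already independent of the characteristic rather than re-establishing them, and it obtains the module-generator weight bound from the Hilbert-series degree (Kempf plus characteristic-independence) where your sketch attributes it somewhat loosely to the good-filtration machinery.
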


Note that $\dim(\SI(Q,\alpha)) \leq \dim \Rep(Q,\alpha)$, which depends on $Q_0$ and $Q_1$. We showed the above results in characteristic $0$ in \cite{DM,DM2}. In this paper, we show that the restrictions on characteristic can be removed.

\subsection{Separating invariants}
In characteristic $0$, Weyl's theorem on polarization of invariants essentially tells us that for a rational representation $V$ of a reductive group $G$, we have $\beta(K[V^{\oplus m}]^G) \leq \beta(\K[V^{\oplus \dim V}]^G)$, see \cite{KP}. Such a result is not true in positive characteristic, and in fact Domokos shows this explicitly for the case of matrix invariants, see \cite{Domokos}. However, an analagous statement holds for separating invariants in arbitrary characteristic.

\begin{definition}
A subset $S$ of $\K[V]^G$ is called a separating subset if for all $v,w \in V$ such that $f(v) \neq f(w)$ for some $f \in \K[V]^G$, there exists $g \in S$ such that $g(v) \neq g(w)$. Define $\beta_{sep}(K[V]^G)$ to be the smallest integer $D$ such that the invariants of degree $\leq D$ form a separating subset.
\end{definition}

In \cite{DKW}, Draisma, Kemper and Wehlau showed the following:

\begin{theorem}[Draisma-Kemper-Wehlau] \label{Weyl.sep}
We have $\beta_{\rm sep}(\K[V^{\oplus m}]^G) \leq \beta_{\rm sep}(\K[V^{\oplus dim V}]^G)$ for all $m \in \N$.
\end{theorem}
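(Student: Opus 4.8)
The plan is to exhibit, inside $K[V^{\oplus m}]^G$, a separating subset all of whose elements have degree at most $\beta_{\rm sep}(K[V^{\oplus\dim V}]^G)$, obtained by pulling back separating invariants of $V^{\oplus\dim V}$ along linear $G$-equivariant maps. Write $n=\dim V$ and identify $V^{\oplus m}$ with $V\otimes K^m$. For $A\in\M_{n,m}(K)$ the map $\phi_A:=\operatorname{id}_V\otimes A\colon V^{\oplus m}\to V^{\oplus n}$ is $G$-equivariant, and $\deg(h\circ\phi_A)=\deg h$ for every $h\in K[V^{\oplus n}]$. Fix a separating subset $S\subseteq K[V^{\oplus n}]^G$ whose members all have degree $\le D:=\beta_{\rm sep}(K[V^{\oplus n}]^G)$ (for instance all invariants of degree $\le D$). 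Then every element of $\Sigma:=\{\,h\circ\phi_A \mid h\in S,\ A\in\M_{n,m}(K)\,\}\subseteq K[V^{\oplus m}]^G$ has degree $\le D$, so it suffices to show that $\Sigma$ is a separating subset; this gives Theorem~\ref{Weyl.sep}.

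For $p,q$ in a $G$-module write $p\sim q$ when $\overline{Gp}\cap\overline{Gq}\neq\emptyset$. Since $G$ is reductive, two points fail to be separated by an invariant precisely when they are $\sim$-equivalent; moreover if $\phi_A(v)\not\sim\phi_A(w)$ for some $A$, then $\phi_A(v),\phi_A(w)$ are separated by some $h\in S$, so $h\circ\phi_A\in\Sigma$ separates $v$ from $w$. Thus Theorem~\ref{Weyl.sep} reduces to the following geometric statement, which is the heart of the matter: \emph{if $v,w\in V^{\oplus m}$ satisfy $v\not\sim w$, then $\phi_A(v)\not\sim\phi_A(w)$ for some $A\in\M_{n,m}(K)$.}

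The essential input is that every $v\in V\otimes K^m$ lies in $V\otimes A_v$ for a subspace $A_v\subseteq K^m$ with $\dim A_v\le n=\dim V$ (take $A_v$ minimal with this property; its dimension is the rank of $v$, which is $\le\dim V$). Since $V\otimes A_v$ is a closed $G$-submodule, $\overline{Gv}\subseteq V\otimes A_v$; likewise $\overline{Gw}\subseteq V\otimes A_w$ with $\dim A_w\le n$. I would first dispose of the case $A_v+A_w\subsetneq K^m$ by descent on $m$. Pick $\kappa\in K^m\setminus(A_v+A_w)$ and a surjection $A\colon K^m\to K^{m-1}$ with $\ker A=K\kappa$. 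Because $\kappa\notin A_v\cup A_w$, the map $\phi_A$ restricts to $G$-isomorphisms of $V\otimes A_v$ and $V\otimes A_w$ onto their images, hence carries $\overline{Gv}$ and $\overline{Gw}$ isomorphically onto closed subsets of $V^{\oplus (m-1)}$; these are disjoint, since a common point would force $x\in\overline{Gv},\ y\in\overline{Gw}$ with $x-y\in(V\otimes K\kappa)\cap\bigl(V\otimes(A_v+A_w)\bigr)=V\otimes\bigl(K\kappa\cap(A_v+A_w)\bigr)=0$, i.e.\ $x=y\in\overline{Gv}\cap\overline{Gw}=\emptyset$. The column spaces of $\phi_A(v),\phi_A(w)$ still span a subspace of the same dimension while the number of copies drops by one, so iterating either reaches a stage with $m'\le n$ copies — where composing with the inclusion $V^{\oplus m'}\hookrightarrow V^{\oplus n}$ (which has a $G$-equivariant retraction) finishes the argument — or reaches a stage where the two column spaces together span all of $K^{m'}$, which forces $n<m'\le 2n$.

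The case $A_v+A_w=K^m$, $n<m\le 2n$, is where I expect the real difficulty. Now no $\phi_A\colon V^{\oplus m}\to V^{\oplus n}$ can be injective on $V\otimes(A_v+A_w)=V\otimes K^m$ (as $m>n$), so the orbit closures cannot be kept apart by injectivity on an ambient subspace, and one must use finer information. The extra structure available is that the "row spaces" $X_v,X_w\subseteq V$ — the images of $v,w$ viewed as maps $(K^m)^{*}\to V$ — satisfy $\dim(X_v+X_w)\le\dim V=n$, and that $\{\phi_A(v):A\in\M_{n,m}(K)\}=X_v\otimes K^n$ and $\{\phi_A(w):A\}=X_w\otimes K^n$ both lie in $(X_v+X_w)\otimes K^n$. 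I would take $A$ generic subject to $A|_{A_v}$ and $A|_{A_w}$ being injective (possible as $\dim A_v,\dim A_w\le n$), so that $\phi_A$ still maps $\overline{Gv}$ and $\overline{Gw}$ isomorphically onto closed sets, and would then rule out the existence of $x\in\overline{Gv}$, $y\in\overline{Gw}$ with $x-y\in V\otimes\ker A$; ruling this out is a general-position argument combining the irreducibility and the dimension bound $\le\dim G$ for the orbit closures with the row-space constraint above. Equivalently one can pass to the generic matrix $T=(t_{ij})$ over $\overline{K(t_{ij})}$: a hypothetical failure would mean $\phi_A(v)\sim\phi_A(w)$ for all $A\in\M_{n,m}(K)$, so the morphisms $\M_{n,m}\to V^{\oplus n}/\!\!/G$ induced by $A\mapsto\phi_A(v)$ and $A\mapsto\phi_A(w)$ agree on $K$-points, hence coincide, forcing $\phi_T(v)\sim\phi_T(w)$; normalizing by $\GL_n$ on the target (which acts by $G$-equivariant, hence $\sim$-preserving, automorphisms) puts $\phi_T(v)$ into the explicit form $(x_1,\dots,x_r,0,\dots,0)$ with $r=\dim A_v$ and $x_1,\dots,x_r\in V(K)$ a basis of $X_v$, after which a suitable $K$-rational specialization of $T$ should manufacture a witness to $v\sim w$, contradicting the hypothesis. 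Making this last step rigorous — extracting a genuine witness from the generic-fibre equality — is, I expect, the crux of the whole proof.
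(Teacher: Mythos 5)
The paper does not prove this statement at all --- it is imported verbatim from \cite{DKW} --- so what your proposal must be measured against is whether it actually establishes the theorem, and it does not. Your setup is fine in the setting the paper needs (reductive $G$, $K$ algebraically closed): pulling back a degree-$\le D$ separating set of $K[V^{\oplus n}]^G$ along the maps $\phi_A$ does not raise degrees, separation is equivalent to disjointness of orbit closures, and your descent step when $A_v+A_w\subsetneq K^m$ is correct (including the observation that injectivity of $\phi_A$ on $V\otimes(A_v+A_w)$ makes the images of the orbit closures closed, so they really are the new orbit closures). But the argument stops exactly where the theorem becomes nontrivial: the case $A_v+A_w=K^m$ with $n<m\le 2n$ is only sketched, and you yourself flag the decisive step as unproven. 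That case is the entire content of the Draisma--Kemper--Wehlau theorem, so what you have is a correct reduction plus an honest admission of the open core, not a proof.

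Neither of the two attacks you sketch for that case is close to closing the gap. The general-position idea fails as a pure dimension count: the ``bad'' matrices are the union, over pairs $(x,y)\in\overline{Gv}\times\overline{Gw}$ (a family of dimension up to $2\dim G$), of linear conditions on $A$ of codimension as small as $n$ (namely that $A$ kill the column space of $x-y$), so for $\dim G\ge n$ this union can cover all of $\M_{n,m}(K)$; moreover the ``row-space constraint'' you hope to exploit is not available, since points of $\overline{Gv}$ have row spaces $gX_v$ and their degenerations, not $X_v$. The generic-matrix idea only yields $\phi_T(v)\sim\phi_T(w)$ over $\overline{K(t_{ij})}$, and because $\phi_T$ is not injective for $m>n$ there is no evident way to specialize this into a witness that $v\sim w$; bridging exactly this is what the proof in \cite{DKW} accomplishes (and it does so by a polarization argument valid for arbitrary linear algebraic groups, with no recourse to the orbit-closure criterion, which in any case is special to reductive $G$). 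To repair your write-up you would either have to supply a genuine proof of the remaining case or simply cite \cite{DKW}, as the paper does.
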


As a corollary, we can obtain bounds for separating invariants in the cases of $S(n,m)$ and $R(n,m)$ that are independent of $m$. 

\begin{corollary} \label{Sep.invs}
We have the following bounds:
\begin{enumerate}
\item $\beta_{\rm sep}(R(n,m)) \leq n^6$
\item $\beta_{\rm sep}(S(n,m)) \leq n^6$
\item For any quiver $Q$, $\beta_{\rm sep} (\I(Q,\alpha)) \leq N^6$, where $N = \sum_{i \in Q_0} \alpha_i$.
\item For any quiver $Q$ with no oriented cycles with $|Q_0| = n$, the semi-invariants of weights $\sigma$ such that  $$|\sigma|_\alpha \leq \displaystyle \frac{3}{256} \left(||\alpha||_1^2 - ||\alpha||_2^2\right) \frac{n^2 ||\alpha||_1^{4n}}{(n-1)^{4n-4}},$$ form a separating subset for $\SI(Q,\alpha)$.
\end{enumerate}
\end{corollary}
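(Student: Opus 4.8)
The plan is to obtain all four bounds from the Draisma-Kemper-Wehlau theorem (Theorem~\ref{Weyl.sep}) together with the generating-degree estimates established above, using the elementary observation that $\beta_{\rm sep}(\K[W]^G)\le\beta(\K[W]^G)$ for every rational representation $W$ of a group $G$: a set of algebra generators of degree $\le D$ is automatically separating, since two points separated by some invariant are separated by a polynomial in such generators, hence by one of them. Thus for (1) I would write $R(n,m)=\K[V^{\oplus m}]^G$ with $V=\M_{n,n}$ and $G=\SL_n\times\SL_n$ acting by left and right multiplication, so that $\dim V=n^2$; Theorem~\ref{Weyl.sep} then gives $\beta_{\rm sep}(R(n,m))\le\beta_{\rm sep}(R(n,n^2))\le\beta(R(n,n^2))\le n^2\cdot n^4=n^6$, the last inequality being Theorem~\ref{msi}. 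Note that this bound on $\beta_{\rm sep}(R(n,m))$ holds for \emph{every} $m$.

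For (2) and (3) I would reduce to (1). The proof of Corollary~\ref{inv.bds} exhibits $\I(Q,\alpha)$ as a quotient of a ring of matrix invariants: the closed (affine-linear) embedding $\Rep(Q,\alpha)\hookrightarrow\M_{N,N}^{|Q_1|}$ sending each arrow to an $N\times N$ block matrix, $N=\sum_i\alpha_i$, intertwines the $\GL(\alpha)$-action with $\GL_N$-conjugation, and Donkin's description of the generators shows that restriction of invariants is a surjection $S(N,|Q_1|)\twoheadrightarrow\I(Q,\alpha)$. Since restriction along a closed embedding carries a separating set of degree $\le d$ to one of degree $\le d$, we get $\beta_{\rm sep}(\I(Q,\alpha))\le\beta_{\rm sep}(S(N,|Q_1|))$; and the reduction of matrix invariants to matrix semi-invariants used in the proof of Theorem~\ref{mi.bds} (the embedding $\M_{n,n}^m\hookrightarrow\M_{n,n}^{m+1}$, $X\mapsto(X,I)$, whose setwise stabilizer in $\SL_n\times\SL_n$ is the diagonal $\SL_n$, acting by conjugation) similarly gives $\beta_{\rm sep}(S(n,m))\le\beta_{\rm sep}(R(n,m+1))$. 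Chaining these with Theorem~\ref{Weyl.sep} as in (1), $\beta_{\rm sep}(\I(Q,\alpha))\le\beta_{\rm sep}(S(N,|Q_1|))\le\beta_{\rm sep}(S(N,N^2))\le\beta_{\rm sep}(R(N,N^2+1))\le N^6$, and the analogous chain gives $\beta_{\rm sep}(S(n,m))\le n^6$, which is also just the case of (3) with $Q$ the $m$-loop quiver.

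For (4), Theorem~\ref{Weyl.sep} does not apply verbatim, since $\Rep(Q,\alpha)$ is not a power of a single $\SL(\alpha)$-module. My approach would be to re-run the proof of Theorem~\ref{si2} with its homogeneous-system-of-parameters input replaced by a separating one: the Krull dimension $r=\dim\SI(Q,\alpha)$ enters that argument only as the number of parameters required (equivalently, through the dimension of the representation space that appears after the Derksen-Weyman reduction), and for a separating set of weights it should be replaceable by $\dim\Rep(\hat Q,\alpha)=\sum_{i<j}\alpha_i\alpha_j=\tfrac{1}{2}\bigl(||\alpha||_1^2-||\alpha||_2^2\bigr)$, the dimension of the representation space of the complete quiver $\hat Q$ on the vertex set $Q_0$. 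When $Q$ has at most one arrow between each pair of vertices this is clear, because then $Q$ is a subquiver of some acyclic orientation of $\hat Q$, so $\SI(Q,\alpha)$ is a graded retract of $\SI(\hat Q,\alpha)$, whose Krull dimension is at most $\dim\Rep(\hat Q,\alpha)$, and substituting this value for $r$ in Theorem~\ref{si2} yields exactly the asserted estimate.

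The hard part will be (4): one must (i) establish the ``separating'' analogue of Theorem~\ref{si2}, namely that the step producing a homogeneous system of parameters can be weakened to producing a separating set of the same size without enlarging the weight bound, and (ii) remove the one-arrow-per-pair hypothesis, which appears to require a further separating-invariants polarization (in the spirit of Theorem~\ref{Weyl.sep}, applied to the individual $\Hom$-spaces making up $\Rep(Q,\alpha)$) that reduces an arbitrary acyclic quiver to that case while preserving the dimension estimate $\tfrac{1}{2}(||\alpha||_1^2-||\alpha||_2^2)$ expressed in terms of the original $\alpha$. Items (1)--(3), by contrast, are essentially formal once Theorem~\ref{Weyl.sep} and the generating-degree results are granted; the only genuine point to verify there is that the reductions to matrix (semi-)invariants are induced by morphisms of varieties along which separating subsets pull back to separating subsets of no larger degree.
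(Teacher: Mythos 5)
Your parts (1)--(3) are correct and essentially the paper's argument: generators are separating, so $\beta_{\rm sep}(K[V^{\oplus m}]^G)\le\beta_{\rm sep}(K[V^{\oplus \dim V}]^G)\le\beta(K[V^{\oplus \dim V}]^G)$, and the transfer of separating subsets along the degree-nonincreasing surjections $S(N,M)\twoheadrightarrow \I(Q,\alpha)$ and $R(n,m+1)\twoheadrightarrow S(n,m)$ is exactly the right justification (the surjectivity, which you cite, is what makes "restriction of a separating set is separating" true). The only divergence is in (2)--(3): the paper applies Theorem~\ref{Weyl.sep} at the level of $S(n,\cdot)$ and then needs the sharper estimate $\beta(R(n,n^2+1))\le (n^2+1)n^3(n-1)$ of Remark~\ref{stronger} to land under $n^6$ (plus a separate $n=1$ case), whereas you apply Theorem~\ref{Weyl.sep} once more to $R(n,n^2+1)$ and get by with Theorem~\ref{msi} alone since $n^2\cdot n^4=n^6$; both routes work.

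Part (4), however, is not proved in your proposal, and the reduction you hope for in your item (ii) is not available. Theorem~\ref{Weyl.sep} (and the blockwise refinement in \cite{DKW}) only lets you cut the number of arrows from $x$ to $y$ down to $\dim\Hom(V(x),V(y))=\alpha_x\alpha_y$, not down to one; so polarization reduces an arbitrary acyclic $Q$ to a quiver $Q'$ with up to $\alpha_x\alpha_y$ parallel arrows per pair, and feeding $Q'$ into Theorem~\ref{si2} only yields the factor $\dim \SI(Q',\alpha)\le\dim\Rep(Q',\alpha)\le\sum_{x<y}(\alpha_x\alpha_y)^2$, which is quadratically worse than the factor $\tfrac12\bigl(||\alpha||_1^2-||\alpha||_2^2\bigr)=\sum_{x<y}\alpha_x\alpha_y$ in the statement (already for the $m$-Kronecker quiver with $\alpha=(a,b)$ and $m$ large, the Krull dimension after polarization is of order $(ab)^2$, not $ab$). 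Your fallback, a ``separating analogue'' of Theorem~\ref{si2} in which the Krull dimension is replaced by $\dim\Rep(\hat Q,\alpha)$, is precisely the missing content: you only verify the one-arrow-per-pair case (which indeed is fine, since then $\SI(Q,\alpha)$ is a weight-graded quotient of $\SI(\hat Q,\alpha)$ and $\dim\SI(\hat Q,\alpha)\le\sum_{x<y}\alpha_x\alpha_y$), and you give no argument for the general case. For comparison, the paper does not re-run anything: it deduces (4) in one line from the chain $\beta_{\rm sep}(K[V^{\oplus m}]^G)\le\beta_{\rm sep}(K[V^{\oplus\dim V}]^G)\le\beta(K[V^{\oplus\dim V}]^G)$ together with Theorem~\ref{si2} --- though making that one line precise runs into the same multiplicity issue you identified, so this is the step that has to be settled in any complete write-up.
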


\section{Computational Invariant theory}
In \cite{Derksen1}, there is a general method for finding degree bounds in characteristic $0$ (see also \cite{DK}). We analyze the role of characteristic $0$ in the method and identify the necessary ingredients in order to adapt the method to positive characteristic. 

Let $V$ be a rational representation of a reductive group $G$. We define the null cone and the Hilbert series.

\begin{definition}
The null cone $\mathcal{N}(G,V) \subseteq V$ is the zero set of all homogeneous invariants in $K[V]^G$ of positive degree. 
\end{definition}

\begin{definition}
For a graded ring $R = \bigoplus_{i \in \N} R_i$, we define its Hilbert series $$H(R,t) = \sum_{i \in \N} \dim(R_i) t^i.$$
\end{definition}

\begin{theorem} [Derksen] \label{D.gen}
Assume $\kar K = 0$, and suppose $f_1,f_2,\dots,f_l$ are homogeneous invariants defining the null cone, and $\deg(f_i) = d_i$. Then we have $$\beta(\K[V]^G) \leq \max\{d_1,d_2,\dots,d_l, d_1 + d_2 + \dots + d_l + \deg(H(K[V]^G,t)\}.$$
\end{theorem}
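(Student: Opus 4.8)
The plan is to reduce the statement to an \emph{algebra‑generation} step and a \emph{Hilbert‑series degree estimate}, with the hypothesis $\kar\K=0$ entering only through the Reynolds operator. Write $A=\K[V]$, $R=A^G$, and let $\mathcal R\colon A\to R$ be the Reynolds operator, i.e. the $R$‑linear projection splitting $R\hookrightarrow A$ (this is where linear reductivity, hence $\kar\K=0$, is used). Put $\mathfrak b=(f_1,\dots,f_l)R$. Since $f_1,\dots,f_l$ cut out $\mathcal N(G,V)$ set‑theoretically and every positive‑degree invariant vanishes on $\mathcal N(G,V)$, the common zero locus of $f_1,\dots,f_l$ in $\operatorname{Spec}R$ is the single point $R_+$; equivalently, applying $\mathcal R$ to a Nullstellensatz relation $g^{\gamma}=\sum_i f_i a_i$ (with $g\in R_+$, $a_i\in A$) and using that $\mathcal R$ is $R$‑linear and $f_i\in R$ shows $g^{\gamma}\in\mathfrak b$. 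Hence $R/\mathfrak b$ is a finite‑dimensional graded $\K$‑algebra.

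I would first dispatch the algebra‑generation step: if $h_1,\dots,h_s$ are homogeneous elements of $R$ whose classes span $R/\mathfrak b$ over $\K$, then $R=\K[f_1,\dots,f_l,h_1,\dots,h_s]$. Indeed, with $R'=\K[f_1,\dots,f_l,h_1,\dots,h_s]$ one has $R=R'+\mathfrak b$; since $f_i\in R'$, feeding this into $\mathfrak b=\sum_i f_iR$ gives $\mathfrak b\subseteq R'+\mathfrak b^{2}$, so $R=R'+\mathfrak b^{N}$ for every $N$, and as $\mathfrak b^{N}$ is concentrated in degrees $\geq N\min_i d_i$ it contributes nothing in any fixed degree once $N$ is large, forcing $R=R'$. (This also reproves finite generation of $R$.) Consequently $\beta(\K[V]^G)\leq\max\{d_1,\dots,d_l,\ \max_j\deg h_j\}$, and we may take the $h_j$ to have degrees exactly the degrees in which $R/\mathfrak b$ is nonzero.

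It then remains to bound the top nonzero degree of $R/\mathfrak b$ by $d_1+\dots+d_l+\deg H(\K[V]^G,t)$, which is the technical heart. Here I would use that $R$ is a domain (as $V$ is irreducible, $A$ and hence $R$ is a domain), so each $f_i$ is a non‑zero‑divisor and $H(f_iR,t)=t^{d_i}H(R,t)$, together with the Reynolds operator, which gives $R\cap(f_1,\dots,f_l)A=\mathfrak b$ and makes the functor $(-)^G$ exact on rational $G$‑modules. Concretely: take the Koszul complex of $f_1,\dots,f_l$ over $A$, apply $(-)^G$ (exact), and compare Hilbert series — the alternating sum of the Hilbert series of the Koszul terms over $R$ equals $\prod_i(1-t^{d_i})\,H(R,t)$, and the point of the estimate is that the higher Koszul homology contributes only in degrees that do not push the top degree of $H_0=R/\mathfrak b$ past $\sum_i d_i+\deg H(R,t)$.

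I expect this Hilbert‑series/syzygy bookkeeping to be the main obstacle: the $f_i$ need not form a regular sequence (in general $l$ exceeds $\dim R$), so one must genuinely control the contributions of the non‑trivial Koszul homology, and it is exactly here that $\kar\K=0$ is used, via the Reynolds operator — which is precisely the ingredient the rest of the paper replaces by good filtrations. Everything preceding this step — the reduction to $R/\mathfrak b$, its finite‑dimensionality, and the passage from a $\K$‑spanning set of $R/\mathfrak b$ to an algebra generating set of $R$ — is soft. Combining the three steps gives $\beta(\K[V]^G)\leq\max\{d_1,\dots,d_l,\ d_1+\dots+d_l+\deg H(\K[V]^G,t)\}$, as claimed.
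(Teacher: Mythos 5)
Your reduction steps are fine (the Nullstellensatz--Reynolds argument that $R/\mathfrak{b}$ is finite dimensional, and the graded-Nakayama step passing from a spanning set of $R/\mathfrak{b}$ to algebra generators of $R$), but the statement you defer --- that the top nonzero degree of $R/\mathfrak{b}$ is at most $d_1+\dots+d_l+\deg H(\K[V]^G,t)$ --- is the entire content of the theorem, and your sketch does not prove it. The Koszul/Euler-characteristic route you propose cannot close it with only exactness of $(-)^G$: the higher Koszul homology of $f_1,\dots,f_l$ over $\K[V]$ is supported on the null cone, which is positive-dimensional, so the invariants of these homology modules are in general infinite-dimensional graded modules; the alternating-sum identity $\sum_i(-1)^iH\bigl((H_i)^G,t\bigr)=\prod_i(1-t^{d_i})H(R,t)$ therefore allows cancellation in arbitrarily high degrees and yields no upper bound on the top degree of $H_0^G=R/\mathfrak{b}$. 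Indeed, without an extra hypothesis the asserted bound can fail, so no bookkeeping based solely on the Reynolds operator can succeed.

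You have also misplaced where characteristic $0$ enters. As the paper states right after the theorem, the essential input is that $\K[V]^G$ is Cohen--Macaulay, by Hochster--Roberts \cite{HR}; the Reynolds operator appears only en route to that. This is precisely what is abstracted in Proposition~\ref{deg.bds.pos}, and what the paper replaces in positive characteristic is the CM property (via good filtrations and Hashimoto's theorem), not exactness of invariants in a Koszul computation. The actual argument (in \cite{Derksen1}, which the paper cites rather than reproves) uses CM-ness to control the syzygies: view $R$ as a finite module over the polynomial ring $A=\K[y_1,\dots,y_l]$, $y_i\mapsto f_i$ (finiteness because the $f_i$ cut out $R_+$ up to radical); CM-ness of $R$ forces, via Auslander--Buchsbaum and graded duality, that the minimal free resolution over $A$ has length $l-\dim R$ and that the generator degrees of $R$ over $A$ --- equivalently the top degree of $R/\mathfrak{b}$ --- are bounded by $\deg\bigl(\prod_i(1-t^{d_i})H(R,t)\bigr)=\sum_i d_i+\deg H(R,t)$, with no cancellation problem. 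To repair your write-up you would need to invoke Hochster--Roberts and carry out this CM-based degree estimate; as it stands the proof has a genuine gap at its technical heart.
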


In the situation above, $H(K[V]^G,t)$ is a rational function, and so we define $\deg(H(K[V]^G,t))$ to be the degree of the numerator $-$ degree of the denominator. The above theorem is a consequence of the fact that the invariant ring $K[V]^G$ is Cohen-Macaulay in characteristic $0$, by the Hochster-Roberts theorem (see \cite{HR}). In order to make effective use of the theorem above to obtain concrete bounds, one needs information about the degree of the Hilbert series as well as a set of invariants defining the null cone. 

In characteristic $0$, Kempf proved that the Hilbert series is a rational function of non-positive degree, see \cite{Kempf}. Knop showed that in certain cases, the degree is in fact atmost the negative of the Krull dimension of the invariant ring, and in some more special cases that the degree is equal to $-\dim V$ (see \cite{Knop1,Knop2,DK}). Further, there is a general method to obtain a set of invariants defining the null cone in characteristic $0$, see \cite{Derksen1}. Hence Theorem~\ref{D.gen} can be used in characteristic $0$ to give concrete bounds. 

The following proposition uses the above techniques to obtain degree bounds.

\begin{proposition} \label{deg.bds.pos}
Assume the following:

\begin{enumerate}
\item $K[V]^G$ is Cohen-Macaulay;
\item $f_1,f_2,\dots,f_l$ are a set of homogeneous invariants defining the null cone, and let $\deg(f_i) = d_i$;
\item $\deg(H(K[V]^G,t)) \leq r$ for some $r \in \Z$. 
\end{enumerate}

Then we have $$\beta(K[V]^G) \leq \max \{d_1,d_2,\dots,d_l, d_1 + d_2 + \dots + d_l + r\}.$$
\end{proposition}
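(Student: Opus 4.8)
The plan is to mimic the characteristic-zero argument of Theorem~\ref{D.gen}, keeping careful track of exactly where characteristic zero was used so that hypotheses (1)--(3) can be substituted in.

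\medskip

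\noindent\textbf{Setup and strategy.} Write $R = K[V]^G$ and $A = K[V]$, both graded with $R \subseteq A$. Let $d = \max\{d_1,\dots,d_l\}$ and $s = d_1 + \cdots + d_l + r$. The goal is to show that $R$ is generated in degrees $\leq \max\{d,s\}$. First I would invoke hypothesis (2): since $f_1,\dots,f_l$ cut out the null cone $\mathcal{N}(G,V)$, by the Nullstellensatz the ideal $\mathfrak{m}_A := (f_1,\dots,f_l)A$ has radical equal to the irrelevant ideal of $A$ pulled back, and crucially the ideal $\mathfrak{m} := (f_1,\dots,f_l)R \subseteq R$ has radical $R_+$ (the irrelevant ideal of $R$); this is the standard fact that a set of invariants defines the null cone iff it generates an ideal whose radical is $R_+$ (see \cite{Derksen1,DK}). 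Consequently $f_1,\dots,f_l$ is a homogeneous system of parameters up to radical, and $R$ is a finite module over the subalgebra $B := K[f_1,\dots,f_l]$.

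\medskip

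\noindent\textbf{The Cohen--Macaulay step.} This is where hypothesis (1) enters, replacing the Hochster--Roberts theorem. Since $R$ is Cohen--Macaulay and finite over $B$, and $B$ is generated by $l$ elements with $R$ finite over it, one can extract from $f_1,\dots,f_l$ an honest homogeneous system of parameters; but more directly, the argument I would run is: choose the $f_i$ themselves (or, if $l$ exceeds $\dim R$, note that $R$ is still a free module over $K[f_1,\dots,f_l]$ modulo the relations — here one wants the cleanest route). The clean route is: because $R$ is Cohen--Macaulay and $f_1,\dots,f_l$ have the property that $R/(f_1,\dots,f_l)R$ is finite-dimensional over $K$, we may pass to a subset forming an h.s.o.p.\ and then $R$ is free over the polynomial ring they generate; I would then bound the top degree of a homogeneous module basis. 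Concretely, if $\theta_1,\dots,\theta_n$ is an h.s.o.p.\ (with $n = \dim R$, $\deg \theta_i \le$ the corresponding $d_i$'s) then $R = \bigoplus_j \eta_j K[\theta_1,\dots,\theta_n]$ is free, and
$$H(R,t) = \frac{\sum_j t^{\deg \eta_j}}{\prod_{i=1}^n (1 - t^{\deg\theta_i})}.$$
Hence $\max_j \deg \eta_j = \deg(H(R,t)) + \sum_i \deg\theta_i \leq r + d_1 + \cdots + d_l = s$ by hypothesis (3) (using that adding the extra $f_i$'s beyond the h.s.o.p.\ only helps, or handling $l = n$ as the main case). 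So every element of $R$ is a $B$-combination of the $\eta_j$, each of degree $\leq s$.

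\medskip

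\noindent\textbf{Finishing.} Now a degree induction: any homogeneous $f \in R$ of degree $e > \max\{d,s\}$ can be written $f = \sum_j \eta_j p_j(f_1,\dots,f_l)$ with $p_j$ polynomials; since $\deg \eta_j \le s < e$, each monomial of $p_j$ contributing in degree $e$ must involve some $f_i$ nontrivially, so $f$ lies in the ideal generated by products $\eta_j \cdot f_i$ with both factors of degree $< e$ (as $d_i \le d < e$ and $\deg \eta_j \le s < e$). Thus $f$ is a polynomial combination of invariants of strictly smaller degree, and induction on $e$ shows $R$ is generated in degrees $\leq \max\{d,s\} = \max\{d_1,\dots,d_l, d_1 + \cdots + d_l + r\}$.

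\medskip

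\noindent\textbf{Main obstacle.} The delicate point is the bookkeeping when $l \neq \dim R$: the $f_i$ need not themselves be an h.s.o.p., only an ideal-theoretic one, so I must either (a) argue that we may extract an h.s.o.p.\ from among the $f_i$ and that the remaining $f_i$ can be absorbed into the module-basis argument without inflating the bound, or (b) work directly with the (possibly non-regular) sequence $f_1,\dots,f_l$ and use Cohen--Macaulayness to control $\mathrm{Tor}$ or the module of relations. I expect (a) is what is intended, and the only real subtlety is verifying that $\deg(H(R,t)) \leq r$ still yields $\max_j \deg\eta_j \leq d_1 + \cdots + d_l + r$ even when the h.s.o.p.\ uses only some of the $d_i$ — which is immediate since the unused $d_i$ are nonnegative.
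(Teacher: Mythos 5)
The paper never writes out a proof of this proposition: it is presented as Derksen's argument for Theorem~\ref{D.gen} (see \cite{Derksen1,DK}) with the Hochster--Roberts theorem and Kempf's result replaced by hypotheses (1) and (3). Your skeleton is exactly that intended argument: the $f_i$ cut out the null cone, so $R=K[V]^G$ is finite over $K[f_1,\dots,f_l]$; Cohen--Macaulayness gives freeness over a homogeneous system of parameters; the Hilbert series identity bounds the degrees of a module basis by $d_1+\cdots+d_l+r$; and graded Nakayama plus induction on degree converts this into the stated bound on algebra generators. In the case $l=\dim R$ --- which is precisely what Theorem~\ref{nullcone.bound} supplies in the paper's applications --- the $f_i$ are automatically an h.s.o.p.\ (hence a regular sequence, by CM), and your argument is complete and correct there.

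For the proposition as actually stated (arbitrary $l$), though, the step you flag as the ``main obstacle'' is a genuine gap, and your preferred fix (a) is false: one cannot in general extract an h.s.o.p.\ as a subset of $\{f_1,\dots,f_l\}$. Already in $R=K[x,y]$ (the invariant ring of the trivial group on $K^2$) the quadrics $f_1=xy$, $f_2=x(x-y)$, $f_3=y(x-y)$ have the origin as their only common zero, so they define the null cone, yet any two of them vanish simultaneously on a line ($x=0$, $y=0$, or $x=y$ respectively), so no subset of size $\dim R=2$ is a system of parameters. Hence when $l>\dim R$ you must genuinely carry out your route (b): graded Nakayama reduces everything to showing that the top nonzero degree of $R/(f_1,\dots,f_l)R$ is at most $d_1+\cdots+d_l+r$, and proving that for a possibly non-regular sequence requires using Cohen--Macaulayness through, e.g., the \v{C}ech or Koszul complex on the $f_i$ (vanishing of $H^i_{R_+}(R)$ for $i<\dim R$, with the top local cohomology concentrated in degrees $\leq \deg H(R,t)$), or an equivalent device as in \cite{Derksen1,DK}. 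Since that part is only gestured at, your proposal as written establishes the proposition only in the case $l=\dim R$ --- which happens to be all the paper needs, but is not the statement being proved.
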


In the cases of interest to us, $(1)$ and $(3)$ will be a consequence of good filtrations, and $(2)$ will be provided by the results in \cite{DM}.

\section{Good Filtrations}
The theory of good filtrations is very powerful in positive characteristic. A comprehensive introduction to this theory can be found in \cite{Donkin2} (see also \cite{Don, Don2, Don3, Hashimoto, Mat}). We also refer the reader to \cite{Domokos2,Zubkov} for an exposition with a view of using them for invariant rings coming from quivers.

Let $G$ be a reductive group, and fix a torus $T$ and fix a borel $B$ containing the torus.  Let $\Lambda$ denote the set of dominant weights. Given $\lambda \in \Lambda$, we have $\lambda:T \rightarrow \K^*$, and we can extend it to a map $\lambda:B \rightarrow \K^*$, by composing with the natural surjection $B \twoheadrightarrow T$. 

\begin{definition}
For $\lambda \in \Lambda$, the dual Weyl module $\nabla(\lambda)$ is defined as 
$$\nabla(\lambda) := \{f \in K[G]\  |\  f(bg) = \lambda(b) f(g)\  \forall \ (b,g) \in B \times G\}.$$
\end{definition}

\begin{definition}
A $G$-module $V$ is called a good $G$-module if $V$ has a filtration of the form $0 \subseteq V_0 \subseteq V_1 \subseteq~\dots$ such that $\bigcup\limits_i V_i = V$ and each quotient $V_i/V_{i-1}$ is a dual Weyl module. Such a filtration is called a good filtration.
\end{definition}

We collect the properties of good filtrations that we require. For the proofs, we refer to the aforementioned references for the theory of good filtrations. 

\begin{proposition} \label{good.filt.prop}
Let $G_1$ and $G_2$ be two reductive groups. Let $V,W$ be good $G_1$-modules. Then we have:
\begin{enumerate}
\item $V \otimes W$ is a good module for $G_1$;
\item $V$ is a good module for $G_1 \times G_2$, where $G_2$ acts trivially on $V$;
\item $\dim(V^{G_1}) = $ multiplicity of the trivial module in any good filtration of $V$;
\item $V$ is a good module for $[G_1,G_1]$;
\item Suppose $W \subseteq V$, then $V/W$ is a good $G_1$-module.
\end{enumerate}
\end{proposition}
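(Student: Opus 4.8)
The plan is to reduce all five statements to standard facts in the theory of good filtrations, isolating the single deep input and deriving the rest by routine homological algebra. The one genuinely hard fact is (1): over an arbitrary field, a tensor product of two modules admitting good filtrations again admits a good filtration. This is Mathieu's theorem (with earlier, case-by-case, results of Donkin and Wang; for the products of general linear groups that are the only relevant case here it is in \cite{Don, Donkin2}), and its proof rests on Frobenius splitting of flag varieties. I would not attempt a new argument here — I would cite it — and this is exactly the step I expect to be the main obstacle; in a self-contained treatment restricted to products of $\GL$'s one could instead pass to the Schur algebra, where good filtrations become Schur (co)filtrations and the tensor statement can be made combinatorially explicit. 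Everything else follows from (1) plus the elementary behaviour of Weyl and dual Weyl modules.

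For (2) and (4) the mechanism is that $\nabla(\lambda) = \operatorname{ind}_B^{G_1}\lambda = H^0(G_1/B, \mathcal L_\lambda)$ is compatible with change of group. For $G_1 \times G_2$ with $G_2$ acting trivially, a Borel is $B_1 \times B_2$, a dominant weight is a pair $(\lambda,\mu)$, and $\nabla_{G_1 \times G_2}(\lambda,\mu) \cong \nabla_{G_1}(\lambda) \otimes \nabla_{G_2}(\mu)$; specializing $\mu = 0$ identifies $\nabla_{G_1 \times G_2}(\lambda, 0)$ with $\nabla_{G_1}(\lambda)$ equipped with the trivial $G_2$-action, so a good $G_1$-filtration of $V$ is verbatim a good $G_1 \times G_2$-filtration. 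For (4), the canonical morphism $[G_1,G_1]/(B \cap [G_1,G_1]) \to G_1/B$ is an isomorphism of varieties, so restricting line bundles gives $\nabla_{G_1}(\lambda)\big|_{[G_1,G_1]} \cong \nabla_{[G_1,G_1]}(\bar\lambda)$ with $\bar\lambda$ the (still dominant) restriction of $\lambda$ to a maximal torus of $[G_1,G_1]$; restricting a good filtration of $V$ term by term then finishes it.

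For (3) and (5), I would first record the standard computation $\operatorname{Hom}_{G_1}(\Delta(\lambda), \nabla(\mu)) = \delta_{\lambda\mu}\,\K$ and $\operatorname{Ext}^{i}_{G_1}(\Delta(\lambda), \nabla(\mu)) = 0$ for $i \geq 1$, where $\Delta(\lambda)$ is the Weyl module. An induction along a good filtration then yields the criterion that $M$ has a good filtration if and only if $\operatorname{Ext}^{i}_{G_1}(\Delta(\lambda), M) = 0$ for all dominant $\lambda$ and all $i \geq 1$, and in that case $\dim \operatorname{Hom}_{G_1}(\Delta(\lambda), M)$ equals the multiplicity of $\nabla(\lambda)$ in any good filtration of $M$ — in particular that multiplicity is well defined. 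Statement (3) is then immediate, since $\Delta(0) = \nabla(0) = \K$ is the trivial module and hence $\dim V^{G_1} = \dim \operatorname{Hom}_{G_1}(\Delta(0), V)$ is the multiplicity of the trivial module in any good filtration. For (5), which I read as the standard assertion that if $W \subseteq V$ with both $W$ and $V$ good then $V/W$ is good (the bare statement needs some hypothesis on $W$), one applies $\operatorname{Hom}_{G_1}(\Delta(\lambda), -)$ to $0 \to W \to V \to V/W \to 0$: the long exact sequence sandwiches $\operatorname{Ext}^{i}_{G_1}(\Delta(\lambda), V/W)$ between $\operatorname{Ext}^{i}_{G_1}(\Delta(\lambda), V) = 0$ and $\operatorname{Ext}^{i+1}_{G_1}(\Delta(\lambda), W) = 0$, so $V/W$ satisfies the criterion. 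The remaining bookkeeping — that good filtrations are preserved under $\otimes$, under the group changes of (2) and (4), and under the quotient of (5) — then combines freely in the applications, and the only place where anything nontrivial is used is (1).
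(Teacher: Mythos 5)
Your proposal is correct, but it is worth noting that the paper does not actually prove this proposition: it is stated as a collection of known facts, with the proofs deferred to the standard references on good filtrations (Donkin's monograph, Mathieu, Hashimoto, etc.). What you have written is essentially the standard argument found in those sources, so the comparison is between a genuine (if sketched) proof and a citation. Your identification of the division of labour is accurate: (1) is the one deep input (Mathieu's theorem in general; Donkin/Wang for the $\GL$-products actually needed here), (2) and (4) are the elementary compatibilities $\nabla_{G_1\times G_2}(\lambda,\mu)\cong\nabla_{G_1}(\lambda)\otimes\nabla_{G_2}(\mu)$ and $\nabla_{G_1}(\lambda)|_{[G_1,G_1]}\cong\nabla_{[G_1,G_1]}(\bar\lambda)$ coming from the identification of the flag varieties, and (3) and (5) follow from $\operatorname{Hom}_{G_1}(\Delta(\lambda),\nabla(\mu))=\delta_{\lambda\mu}\K$, $\operatorname{Ext}^{i}_{G_1}(\Delta(\lambda),\nabla(\mu))=0$ for $i\geq 1$, and the resulting cohomological criterion for the existence of a good filtration. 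Your reading of (5) is also the intended one: the blanket hypothesis of the proposition makes both $V$ and $W$ good, which is exactly what your long-exact-sequence argument needs. Two minor points to flag if you were to write this out in full: the modules the paper applies this to (coordinate rings) are infinite-dimensional, so you should either invoke the version of the $\operatorname{Ext}^1(\Delta(\lambda),-)$-vanishing criterion valid for arbitrary rational modules (every rational module is a union of finite-dimensional submodules, and the criterion does hold in that generality) or work degreewise with the finite-dimensional graded pieces; and in (3) the passage from $\operatorname{Hom}_{G_1}(\Delta(0),V)$ to the filtration multiplicity for an infinite good filtration uses that $\operatorname{Ext}^{*}_{G_1}(\Delta(\lambda),-)$ commutes with the relevant direct limits, which is standard but should be said.
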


We also recall the main result from \cite{Hashimoto}, which will be crucial to our purposes. 

\begin{theorem} [Hashimoto] \label{CMgf}
Assume $\kar K > 0$. Let $V$ be a rational representation of a connected reductive group $G$, and assume its coordinate ring $K[V]$ is a good $G$-module. Then $K[V]^G$ is strongly $F$-regular and hence Cohen-Macaulay.
\end{theorem}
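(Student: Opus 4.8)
The substantive claim is that $R:=\K[V]^G$ is strongly $F$-regular; Cohen-Macaulayness is then automatic, as strongly $F$-regular rings are $F$-rational and hence Cohen-Macaulay (Hochster-Huneke). Recall that a Noetherian domain $R$ of characteristic $p>0$ is strongly $F$-regular if for every nonzero $c\in R$ there exist $e\ge 1$ and an $R$-linear map $\varphi\colon F^e_\ast R\to R$ with $\varphi(F^e_\ast c)=1$, where $F^e_\ast R$ denotes $R$ regarded as a module over itself through the $e$-fold Frobenius. Since $\K[V]=\Sym(V^\ast)$ is a polynomial ring it is regular, hence strongly $F$-regular, so the entire content of the theorem is that strong $F$-regularity \emph{descends} from $\K[V]$ to $\K[V]^G$. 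In characteristic $0$ this descent is immediate: the Reynolds operator exhibits $\K[V]^G$ as a $\K[V]^G$-module direct summand of $\K[V]$, and strong $F$-regularity passes to direct summands. The only genuine task is therefore to produce a substitute for the Reynolds operator in characteristic $p$, and this is exactly the role played by the good filtration hypothesis.

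First I would reduce to the case $G$ semisimple. Write $G=[G,G]\cdot Z^\circ$ with $[G,G]$ connected semisimple and $Z^\circ$ the connected centre, a torus. By Proposition~\ref{good.filt.prop}(4), $\K[V]$ is still a good $[G,G]$-module, so once the semisimple case is known, $\K[V]^{[G,G]}$ is strongly $F$-regular. As $Z^\circ$ is a torus, its (commuting) action makes $\K[V]^{[G,G]}$ into a ring graded by the character group of $Z^\circ$, with $\K[V]^{G}=(\K[V]^{[G,G]})^{Z^\circ}$ the degree-$0$ component; since a graded direct summand of a strongly $F$-regular ring is again strongly $F$-regular, we may assume $G$ semisimple.

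Now fix $0\ne c\in R$. Strong $F$-regularity of the polynomial ring $\K[V]$ furnishes $e$ and a $\K[V]$-linear $\varphi\colon F^e_\ast \K[V]\to \K[V]$ with $\varphi(F^e_\ast c)=1$, and it suffices to choose such a $\varphi$ that is moreover $G$-equivariant: a $G$-equivariant $\K[V]$-linear splitting automatically maps $F^e_\ast R$ into $R$ and restricts to the desired $R$-linear map $F^e_\ast R\to R$. The module $M:=\Hom_{\K[V]}(F^e_\ast \K[V],\K[V])$ is a rational $G$-module (finite free over $\K[V]$), and evaluation at $F^e_\ast c$ is a $G$-equivariant, $\K[V]$-linear surjection $\mathrm{ev}\colon M\to\K[V]$ (surjective because its image is an ideal containing $1$). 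The decisive input --- and the step I expect to be the main obstacle --- is that $M$ \emph{inherits a good filtration from $\K[V]$}: this is a theorem about the compatibility of good filtrations with the finite flat Frobenius morphism, and it is exactly here that one uses that $\K[V]$ is a \emph{good} $G$-module, not merely that $G$ is reductive. Granting it, $M$ and $\K[V]$ are both modules with good filtrations, and the good-filtration machinery underlying Proposition~\ref{good.filt.prop} --- surjectivity of the relevant multiplication maps, the description of trivial isotypic components, and the attendant vanishing of higher rational $G$-cohomology on modules with good filtrations --- is precisely the substitute for the Reynolds operator: it allows one to pass from the non-equivariant $\varphi$ to a $G$-equivariant $\varphi'\in M$ still satisfying $\varphi'(F^e_\ast c)=1$.

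Putting the two steps together, $R=\K[V]^G$ is strongly $F$-regular, hence normal and Cohen-Macaulay, which is the assertion. This is Hashimoto's theorem; the delicate step relating good filtrations to Frobenius --- the real crux --- is carried out in \cite{Hashimoto}. The same circle of ideas, incidentally, also pins down the graded canonical module of $\K[V]^G$, and with it the bound on $\deg H(\K[V]^G,t)$ that is needed to apply Proposition~\ref{deg.bds.pos}.
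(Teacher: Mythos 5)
This theorem is not proved in the paper at all: it is imported verbatim from \cite{Hashimoto}, so there is no internal argument to compare against. Read as an outline, your reductions are fine: strong $F$-regularity does imply Cohen--Macaulayness; the passage from connected reductive $G$ to $[G,G]$ via Proposition~\ref{good.filt.prop}(4), followed by taking the degree-zero component of the $X(Z^\circ)$-grading, correctly reduces to the semisimple case because strong $F$-regularity descends to direct summands; and a $G$-equivariant $\K[V]$-linear $\varphi$ with $\varphi(F^e_\ast c)=1$ does restrict to the required $R$-linear splitting, since the $G$-action on $F^e_\ast \K[V]$ is the usual one and $c$ is invariant.

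The problem is that the proposal stops exactly where the theorem starts. The two claims you lean on at the crux are not established and are not formal consequences of Proposition~\ref{good.filt.prop}: (i) that $M=\Hom_{\K[V]}(F^e_\ast \K[V],\K[V])$ ``inherits a good filtration'' from $\K[V]$ --- Frobenius pushforwards and twists do not interact with good filtrations in any routine way, and making this work is a substantial theorem in its own right (this is where Hashimoto's paper has to do real work, not a bookkeeping step); and (ii) that good filtrations on $M$ and $\K[V]$ would already let you equivariantize $\varphi$: surjectivity of $M^G\to \K[V]^G$ at the element $1$ needs $H^1(G,\ker(\mathrm{ev}))=0$, and the kernel of a surjection of modules with good filtrations need not have a good filtration, so even granting (i) an argument is missing. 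Since you explicitly defer both points to \cite{Hashimoto}, your text is an annotated citation rather than a proof --- which, to be fair, is precisely how the paper itself treats the statement --- but as a self-contained proof attempt the decisive step is assumed rather than proved.
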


\section{Good filtration of $Rep(Q,\alpha)$}
In this section, we will sketch a proof of the following proposition using arguments that are similar to the ones in \cite{Domokos2,Zubkov}.

\begin{proposition} \label{gfrep}
$K[\Rep(Q,\alpha)]$ is a good $\GL(\alpha)$-module and hence a good $\SL(\alpha)$-module. 
\end{proposition}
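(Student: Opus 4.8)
The plan is to reduce everything to the two closure properties of good modules already recorded in Proposition~\ref{good.filt.prop}, namely that tensor products of good modules are good and that a good module for a reductive group $G$ is automatically good for $[G,G]$. First I would observe that
$$K[\Rep(Q,\alpha)] = \bigotimes_{a \in Q_1} K[\Hom(V(ta),V(ha))] = \bigotimes_{a \in Q_1} K\bigl[\M_{\alpha_{ha},\alpha_{ta}}\bigr],$$
where each tensor factor carries the natural $\GL(\alpha)$-action in which only the two factors $\GL(V(ta))$ and $\GL(V(ha))$ act nontrivially (and these two coincide when $a$ is a loop). By part (1) of Proposition~\ref{good.filt.prop}, it suffices to prove that each factor $K[\M_{p,q}]$ is a good $\GL_p \times \GL_q$-module (or a good $\GL_p$-module under $A \cdot X = A X A^{-1}$ in the loop case, but this follows from the $\GL_p\times\GL_p$ statement by restricting along the diagonal, using that a good module stays good under the relevant subgroup — more precisely one uses that the diagonal $\GL_p \hookrightarrow \GL_p \times \GL_p$ together with part (2) lets us view it this way, or simply invokes that $\GL_p$ embeds as a Levi). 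Then part (4) of Proposition~\ref{good.filt.prop} gives the $\SL(\alpha)$ statement for free.

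The heart of the matter is therefore the single classical fact: for the action of $G = \GL_p \times \GL_q$ on $\M_{p,q}$ by $(A,B)\cdot X = A X B^{-1}$, the coordinate ring $K[\M_{p,q}]$ has a good filtration, with an explicit description of the graded pieces. Here I would invoke the Cauchy formula / straightening law in its characteristic-free form: as a $\GL_p \times \GL_q$-module,
$$K[\M_{p,q}] \cong \bigoplus_{d \geq 0} \Sym^d(V_p^* \otimes V_q)$$
and each $\Sym^d(V_p^* \otimes V_q)$ has a good filtration whose subquotients are $\nabla_{\GL_p}(\lambda)^* \otimes \nabla_{\GL_q}(\lambda)$ (suitably interpreted for the two $\GL$'s), the sum running over partitions $\lambda$ of $d$ with at most $\min(p,q)$ parts. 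This is exactly the content worked out in Donkin's and Zubkov's treatments and in \cite{Domokos2}; since the excerpt explicitly says the proof is "sketched" and "similar to \cite{Domokos2,Zubkov}", I would cite those references for the straightening-law input rather than reprove it, and then assemble the pieces: tensor up over all arrows via (1), pass to $\SL(\alpha)$ via (4).

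The main obstacle — and the only genuinely non-formal step — is establishing the good filtration of $K[\M_{p,q}]$ itself, i.e.\ the characteristic-free Cauchy decomposition together with the claim that $\Sym^d$ of a tensor product of natural modules has a good filtration. In characteristic zero this is just complete reducibility plus Cauchy's identity, but in positive characteristic one genuinely needs the theory of Schur/Weyl functors and the fact (due to Akin–Buchsbaum–Weyman, and reformulated by Donkin, Boffi, etc.) that $\Sym^d(M \otimes N)$ carries a filtration with subquotients $L_\lambda(M) \otimes L_\lambda(N)$ where $L_\lambda$ is the Schur functor; combined with the fact that applying a Schur functor $L_\lambda$ to the natural module of $\GL_n$ yields $\nabla(\lambda)$ (a dual Weyl module), this gives the good filtration. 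A minor subtlety to handle carefully is the loop case: when $ta = ha$, the relevant group is the diagonal $\GL_p$ acting by conjugation, and one must check that restricting a good $\GL_p \times \GL_p$-module along the diagonal embedding (which is the embedding of a Levi subgroup, up to the standard identification) preserves goodness — this is again a standard property of good filtrations and I would cite \cite{Donkin2} for it. Everything else is bookkeeping: tensoring over the finitely many arrows and invoking parts (1), (4) of Proposition~\ref{good.filt.prop}.
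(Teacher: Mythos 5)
Your proposal follows essentially the same route as the paper: reduce along arrows via the tensor-product property, handle each arrow through the characteristic-free Cauchy/ABW filtration of $\Sym^d$ of a tensor product, and pass from $\GL(\alpha)$ to $\SL(\alpha)$ via part (4) of Proposition~\ref{good.filt.prop}. Two points, however, need repair. First, your justification of the loop case is wrong as stated: the diagonal copy of $\GL_p$ inside $\GL_p\times\GL_p$ is \emph{not} a Levi subgroup, and restriction of good modules to arbitrary reductive subgroups does not preserve goodness in general. The statement you want (restriction along the diagonal preserves good filtrations) is true, but its proof \emph{is} the tensor-product theorem: a factor $\nabla(\lambda)\boxtimes\nabla(\mu)$ restricts to the internal tensor product $\nabla(\lambda)\otimes\nabla(\mu)$, which is good by part (1), and one then uses that a module filtered by good modules is good. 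The paper sidesteps this entirely by running the same internal-tensor argument directly on $\Sym^t(V(i)\otimes V(i)^*)$: the Cauchy factors $L_\lambda(V(i))\otimes L_\lambda(V(i)^*)$ are internal tensor products of good $\GL(V(i))$-modules, hence good. Second, your description of the Cauchy factors as $\nabla_{\GL_p}(\lambda)^*\otimes\nabla_{\GL_q}(\lambda)$ is not literally a good filtration: the linear dual of a dual Weyl module is a Weyl module $\Delta(-w_0\lambda)$, not a dual Weyl module. The correct factors are $L_\lambda(V_p^*)\otimes L_\lambda(V_q)$, and the nontrivial point is that $L_\lambda(V_p^*)$ is a good $\GL_p$-module; this is exactly where the paper does its only real work, using the finite resolution of $L_\lambda(W^*)$ by direct sums of tensors of exterior powers of $W^*$ (part (2) of Theorem~\ref{ABWAB}, from \cite{AB}), the identification $\bigwedge^i(W^*)\cong\bigwedge^{\dim W-i}(W)\otimes K(\det^{-1})$, and closure of good modules under the quotients appearing in part (5) of Proposition~\ref{good.filt.prop}. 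With those two fixes (or by citing \cite{Donkin2,Domokos2,Zubkov} for precisely these facts rather than the "Levi" shortcut), your argument matches the paper's.
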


First note that since $\SL(\alpha) = [\GL(\alpha),\GL(\alpha)]$, it suffices to prove that $K[\Rep(Q,\alpha)]$ is a good $\GL(\alpha)$-module.

Assume $V$ and $W$ be finite dimensional vector spaces. 
For a partition $\lambda$ such that $\lambda_1 \leq \dim V$, let $L_{\lambda}$ denote the Schur functor corresponding to $\lambda$ as in \cite{ABW}. We note that $L_{\lambda}(V)$ is simply the dual Weyl module for the weight $\widetilde{\lambda}$, the conjugate partition to $\lambda$. In particular, $L_{(r)} (V)$ denotes the $r^{th}$ exterior power $\bigwedge^r(V)$ and 
$L_{(1^r)}(V)$ denotes the $r^{th}$ symmetric power $\Sym^r(V)$.

Part $(1)$ of the following theorem can be found in \cite{ABW} and part $(2)$ in \cite{AB}.

\begin{theorem} [\cite{ABW,AB}] \label{ABWAB}
We have the following:
\begin{enumerate}
\item $\Sym^t(V \otimes W)$ has a filtration whose associated graded object is $$\bigoplus_{|\lambda| = t} L_{\lambda}(V) \otimes L_{\lambda}(W).$$
\item $L_{\lambda}(V)$ has a finite resolution all of whose terms are direct sums of tensors of exterior powers of $V$.
\end{enumerate}
\end{theorem}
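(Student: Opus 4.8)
The plan is to prove Proposition~\ref{gfrep}, namely that $K[\Rep(Q,\alpha)]$ is a good $\GL(\alpha)$-module, by assembling the Cauchy-type filtration of Theorem~\ref{ABWAB}(1) with the closure properties of good filtrations in Proposition~\ref{good.filt.prop}. As noted immediately after the statement, since $\SL(\alpha) = [\GL(\alpha),\GL(\alpha)]$, part $(4)$ of Proposition~\ref{good.filt.prop} reduces everything to the $\GL(\alpha)$ case, so I would only work with $G = \GL(\alpha)$.

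First I would set up the coordinate ring as a tensor product over the arrows. By definition $\Rep(Q,\alpha) = \prod_{a \in Q_1} \Hom(V(ta), V(ha))$, so its coordinate ring factors as $$K[\Rep(Q,\alpha)] = \bigotimes_{a \in Q_1} K[\Hom(V(ta), V(ha))] = \bigotimes_{a \in Q_1} \Sym\bigl(V(ta) \otimes V(ha)^*\bigr),$$ where I identify the space of linear functions on $\Hom(V(ta),V(ha))$ with $V(ta) \otimes V(ha)^*$. By part $(1)$ of Proposition~\ref{good.filt.prop}, a tensor product of good $\GL(\alpha)$-modules is again good, so it suffices to show that each factor $\Sym\bigl(V(ta) \otimes V(ha)^*\bigr)$ is a good $\GL(\alpha)$-module. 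Since each graded piece is $\Sym^t\bigl(V(ta) \otimes V(ha)^*\bigr)$ and a direct sum (or union of the partial sums) of good modules is good, I would reduce to showing that $\Sym^t(V \otimes W)$ is a good $\GL(V) \times \GL(W)$-module for finite-dimensional $V, W$; the twist by duals and the embedding of $\GL(\alpha)$ acting through the two relevant factors is handled by part $(2)$ of Proposition~\ref{good.filt.prop} (trivial action of the remaining factors) together with the fact that $W^*$ as a $\GL(W)$-module carries a good filtration by the $L_\lambda(W^*)$'s.

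The core step is then Theorem~\ref{ABWAB}(1): $\Sym^t(V \otimes W)$ has a filtration with associated graded $\bigoplus_{|\lambda| = t} L_\lambda(V) \otimes L_\lambda(W)$. Because each $L_\lambda(V)$ is by the stated identification a dual Weyl module $\nabla(\widetilde{\lambda})$ for $\GL(V)$, and likewise $L_\lambda(W)$ for $\GL(W)$, each summand $L_\lambda(V) \otimes L_\lambda(W)$ is a tensor product of dual Weyl modules for the two commuting factors. Using part $(1)$ of Proposition~\ref{good.filt.prop} (external tensor products of good modules are good) I conclude each $L_\lambda(V) \otimes L_\lambda(W)$ is a good $\GL(V) \times \GL(W)$-module, hence the whole associated graded is good, and therefore $\Sym^t(V \otimes W)$ is good. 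To finish, I would observe that a module admitting a filtration whose successive quotients are themselves good admits a good filtration by refinement, so ``good associated graded'' upgrades to ``good.''

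The main obstacle I anticipate is bookkeeping the dualization and the embedding $\GL(\alpha) \hookrightarrow \prod \GL(V(i))$ correctly, rather than any deep input: I must check that $L_\lambda(W^*)$ really is a dual Weyl module for $\GL(W)$ (equivalently that the $L_\lambda$'s and their duals fit the weight conventions fixed in Section~3), and that restricting a good $\prod_i \GL(V(i))$-module along the diagonal-type action at a single arrow stays good. This is exactly where part $(2)$ of Proposition~\ref{good.filt.prop} and the invariance of the ``good'' property under the relevant group inclusions must be applied carefully; once the conventions are pinned down, the argument is a formal application of the tensor-product and refinement closure properties to Theorem~\ref{ABWAB}(1).
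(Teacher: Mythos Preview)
Your proposal is not a proof of Theorem~\ref{ABWAB} at all: that theorem is a cited result from \cite{ABW,AB} and the paper does not prove it. What you have written is an outline of a proof of Proposition~\ref{gfrep}, which \emph{uses} Theorem~\ref{ABWAB}. Assuming that was your actual intent, your argument has a genuine gap at exactly the point you flag as ``bookkeeping.''

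You reduce everything to showing that each $L_\lambda(V(ta)) \otimes L_\lambda(V(ha)^*)$ is a good $\GL(\alpha)$-module, and you propose to handle the dual factor by checking that $L_\lambda(W^*)$ ``really is a dual Weyl module for $\GL(W)$,'' treating this as a weight-convention issue. It is not. In positive characteristic there is no reason for $L_\lambda(W^*)$ to be a single costandard module $\nabla(\mu)$; applying a Schur functor to the non-polynomial module $W^*$ does not land you back among the $\nabla$'s for free. The paper does not claim this either. Instead it uses part~(2) of Theorem~\ref{ABWAB}: $L_\lambda(W^*)$ has a finite resolution by direct sums of tensors of exterior powers of $W^*$, and each $\bigwedge^i(W^*) \cong \bigwedge^{\dim W - i}(W) \otimes K(\det^{-1})$ \emph{is} a dual Weyl module, hence good. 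Tensor products of these are good by Proposition~\ref{good.filt.prop}(1), and then Proposition~\ref{good.filt.prop}(5) applied along the resolution shows $L_\lambda(W^*)$ is good. Only after this can one combine with $L_\lambda(V)$ and invoke the Cauchy filtration of Theorem~\ref{ABWAB}(1). Your outline uses only part~(1) of Theorem~\ref{ABWAB} and never invokes part~(2), so as written it does not close.
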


We wish to show that $L_{\lambda}(W^*)$ is a good $GL(W)$-module. We follow the argument in \cite{Zubkov}, and first observe that $\bigwedge^i(W^*)$ is a good $GL(W)$-module, since $\bigwedge^i(W^*) \cong \bigwedge^{\dim W - i}(W) \otimes K(det^{-1})$ where $K(det^{-1})$ denotes the $1$-dimensional representation $\GL(V) \rightarrow \GL(K) = K^*$ given by $g \mapsto det(g)^{-1}$. Now, by part $(2)$ of the above theorem, we have that $L_{\lambda}(W^*)$ has a finite resolution whose terms are direct sums of tensors of exterior powers of $W^*$, and hence a finite resolution by good $\GL(W)$-modules. Hence by part $(5)$ of Proposition~\ref{good.filt.prop}, we have that $L_{\lambda}(W^*)$ is a good $\GL(W)$-module. 
 
 By part $(2)$ of Proposition~\ref{good.filt.prop}, we have that $L_{\lambda}(V)$ and $L_{\lambda}(W^*)$ are good $\GL(V) \times \GL(W)$-modules, and hence so is their tensor product. By the filtration given in part $(1)$ of the above theorem, we have that $\Sym^t(V \otimes W^*)$ is a good $\GL(V) \times \GL(W)$-module. Note also that the same argument shows that $\Sym^t(V \otimes V^*)$ is a good $\GL(V)$-module. 
 
 Now, we are ready to prove Proposition~\ref{gfrep}.
 
\begin{proof} [Proof of Proposition~\ref{gfrep}]
For each arrow $a \in Q_1$ such that $ta \neq ha$, we have that $\Sym^t(V(ta) \otimes V(ha)^*)$ is a good $\GL(V(ta)) \times \GL(V(ha))$-module for all $t \in \N$, and hence a good $\GL(\alpha)$-module. For each arrow $a \in Q_1$ such that $ta = ha = i$, we have that $\Sym^t(V(ta) \otimes V(ha)^*) = \Sym^t(V(i) \otimes V(i)^*)$ is a good $\GL(V(i))$-module, and hence a good $GL(\alpha)$-module. Now, 
$$K[\Rep(Q,\alpha)]_d = \bigoplus_{(t_a) \in (\N)^{Q_1},  \sum_{a\in Q_1} t_{a} = d} \left(\bigotimes\limits_{a \in Q_1} \Sym^{t_a}(V(ta) \otimes V(ha)^*)\right),$$

and hence a good $\GL(\alpha)$-module. Hence we have that $K[\Rep(Q,\alpha)]$ is a good $\GL(\alpha)$-module, and consequently a good $\SL(\alpha)$-module.

\end{proof}

\begin{corollary} \label{CM}
The invariant rings $\SI(Q,\alpha),\I(Q,\alpha),S(n,m)$ and $R(n,m)$ are Cohen-Macaulay.
\end{corollary}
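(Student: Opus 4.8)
The plan is to deduce the corollary directly from Proposition~\ref{gfrep} together with the Cohen--Macaulayness criteria already recorded, treating positive and zero characteristic separately. First I would reduce to the two ``universal'' cases $\I(Q,\alpha)$ and $\SI(Q,\alpha)$: the ring $S(n,m)$ is exactly $\I(Q,\alpha)$ for $Q$ the $m$-loop quiver and $\alpha = n$, and $R(n,m)$ is exactly $\SI(Q,\alpha)$ for $Q$ the $m$-Kronecker quiver and $\alpha = (n,n)$, so it suffices to treat the invariant and semi-invariant rings of an arbitrary quiver.

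In the case $\kar K > 0$, I would observe that $\GL(\alpha) = \prod_{i \in Q_0}\GL(V(i))$ and $\SL(\alpha) = \prod_{i \in Q_0}\SL(V(i))$ are connected reductive groups, and that Proposition~\ref{gfrep} says $K[\Rep(Q,\alpha)]$ is a good module for each of them. Hashimoto's theorem (Theorem~\ref{CMgf}) then immediately yields that $\I(Q,\alpha) = K[\Rep(Q,\alpha)]^{\GL(\alpha)}$ and $\SI(Q,\alpha) = K[\Rep(Q,\alpha)]^{\SL(\alpha)}$ are strongly $F$-regular, and in particular Cohen--Macaulay. In the case $\kar K = 0$, the groups $\GL(\alpha)$ and $\SL(\alpha)$ are linearly reductive, so the Hochster--Roberts theorem \cite{HR} applies directly: any ring of invariants of a linearly reductive group acting rationally on a polynomial ring is Cohen--Macaulay.

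I do not expect a genuine obstacle here, since the substantive content is entirely carried by Proposition~\ref{gfrep} and by Hashimoto's theorem, both already in hand. The only things left to verify are bookkeeping: that the relevant groups really are connected reductive (immediate, being products of $\GL$'s and $\SL$'s), and that $S(n,m)$ and $R(n,m)$ are genuinely instances of the quiver constructions (immediate from the definitions in Section~1).
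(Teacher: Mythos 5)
Your proposal is correct and matches the paper's own argument: in characteristic $0$ the corollary is the Hochster--Roberts theorem, and in positive characteristic it is Hashimoto's Theorem~\ref{CMgf} applied via the good filtration of $K[\Rep(Q,\alpha)]$ from Proposition~\ref{gfrep}. The only difference is that you spell out the reduction of $S(n,m)$ and $R(n,m)$ to the loop and Kronecker quiver cases, which the paper leaves implicit.
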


\begin{proof}
In characteristic $0$, this follows from the Hochster-Roberts theorem (see \cite{HR}), and in positive characteristic, this follows from Theorem~\ref{CMgf}.
\end{proof}

\section{Hilbert series}
The aim of this section is to show that the Hilbert series of the invariant rings of interest to us do not depend on the characteristic. 
\begin{proposition} \label{Hilb.si}
The Hilbert series  $H(\SI(Q,\alpha),t)$ and $H(\I(Q,\alpha),t)$ are independent of the underlying field $K$.
\end{proposition}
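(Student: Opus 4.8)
The plan is to show that the dimensions of the graded (and weight) components of these invariant rings are counted by the same combinatorial data regardless of the field, by reducing everything to multiplicities in good filtrations. The key point is Proposition~\ref{good.filt.prop}(3): for a good $G$-module $M$, $\dim(M^G)$ equals the multiplicity of the trivial module in any good filtration of $M$. So the first step is to recall that $K[\Rep(Q,\alpha)]$ is a good $\GL(\alpha)$-module (Proposition~\ref{gfrep}), and that its decomposition as built up in the previous section is entirely explicit: in each degree $d$ it is a direct sum over tuples $(t_a)_{a\in Q_1}$ with $\sum t_a = d$ of tensor products $\bigotimes_a \Sym^{t_a}(V(ta)\otimes V(ha)^*)$, and each such tensor factor has a good filtration with graded pieces $L_\lambda(V(ta))\otimes L_\lambda(V(ha)^*)$ coming from Theorem~\ref{ABWAB}(1).

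The second step is to observe that the \emph{multiplicities} of the dual Weyl modules $\nabla(\mu)$ appearing in a good filtration of $K[\Rep(Q,\alpha)]$ — equivalently, the number of times each $\prod_i L_{\lambda^{(i)}}(V(i))\boxtimes(\text{something})$ occurs — are given by Littlewood–Richardson-type coefficients and plethysm-type data that are defined purely combinatorially and do not depend on $K$. Concretely: the good filtration of $\bigotimes_a \Sym^{t_a}(V(ta)\otimes V(ha)^*)$ has graded pieces indexed by collections of partitions, one per arrow, and refining the tensor product of the resulting $\GL(V(i))$-modules at each vertex $i$ into dual Weyl modules introduces only Littlewood–Richardson coefficients. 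Crucially, the splitting principle and the resolutions in Theorem~\ref{ABWAB}(2) are characteristic-free, so the \emph{multiset} of dual Weyl modules occurring (with multiplicity) in a good filtration of $K[\Rep(Q,\alpha)]_d$ is the same for every $K$. Then by Proposition~\ref{good.filt.prop}(3), $\dim \I(Q,\alpha)_d = \dim\big(K[\Rep(Q,\alpha)]_d^{\GL(\alpha)}\big)$ is the multiplicity of the trivial module in that multiset, hence independent of $K$; this proves the statement for $H(\I(Q,\alpha),t)$.

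For $\SI(Q,\alpha)$ one argues by weight spaces. For a fixed weight $\sigma$, $\SI(Q,\alpha)_\sigma$ is the space of vectors in $K[\Rep(Q,\alpha)]$ on which $\GL(\alpha)$ acts by $\chi_\sigma$; equivalently, it is $\Hom_{\GL(\alpha)}(\nabla(\sigma)^*,\,K[\Rep(Q,\alpha)])$-type data, so its dimension is the multiplicity of the dual Weyl module of highest weight $-\sigma$ (suitably interpreted as a one-dimensional-at-each-vertex twist) in a good filtration of $K[\Rep(Q,\alpha)]$. More cleanly: $\SI(Q,\alpha)_\sigma \cong \big(K[\Rep(Q,\alpha)]\otimes \nabla(\sigma)^\vee\big)^{\GL(\alpha)}$ up to the determinantal twist bookkeeping, and $\nabla(\sigma)^\vee$ (a tensor of powers of one-dimensional determinant characters, which are themselves good modules) tensored with a good module is good by Proposition~\ref{good.filt.prop}(1), so Proposition~\ref{good.filt.prop}(3) again expresses $\dim \SI(Q,\alpha)_\sigma$ as a characteristic-independent multiplicity. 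Summing over $d$ with the appropriate grading gives that $H(\SI(Q,\alpha),t)$ is independent of $K$.

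The main obstacle I expect is the bookkeeping in the second and third steps: making precise that "the multiset of dual Weyl modules in a good filtration" is well-defined and computable in a characteristic-free way. A good filtration is not canonical, but Proposition~\ref{good.filt.prop}(3) — and more generally the fact that the multiplicity of each $\nabla(\lambda)$ in any good filtration is well-defined (it equals $\dim\Hom(\Delta(\lambda),M)$ for the corresponding Weyl module $\Delta(\lambda)$) — handles this. The genuinely delicate point is to verify that passing from the ABW/AB filtrations of the $\Sym^{t_a}(V(ta)\otimes V(ha)^*)$ through tensor products over the arrows and then decomposing the vertex-wise $\GL(V(i))$-actions introduces only Littlewood–Richardson coefficients, which are combinatorially defined and hence field-independent. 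One safe way to sidestep even this is to invoke the fact that the number of standard tableaux / the rational-function identity computing these Hilbert series via characters of $\GL$ is formally the same: one can compute $H(\I(Q,\alpha),t)$ and $H(\SI(Q,\alpha),t)$ in characteristic $0$ by classical invariant theory (e.g.\ via an integral/residue formula in the eigenvalues), and then note that good filtrations force the positive-characteristic dimensions to agree term-by-term with the characteristic-$0$ ones, since in both cases the graded dimension is "multiplicity of the trivial in a good filtration" and the good filtration multiplicities of $K[\Rep(Q,\alpha)]$ are literally the characteristic-$0$ $\GL(\alpha)$-decomposition multiplicities.
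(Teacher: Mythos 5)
Your proposal is correct in substance, but it is your parenthetical ``safe way to sidestep'' that is actually the paper's proof, while your primary route leaves its load-bearing step exactly where you flag it as delicate. The paper does not track the ABW/Cauchy filtration, plethysms, or Littlewood--Richardson data at all. It argues in one stroke: the character of $K[\Rep(Q,\alpha)]_d$ is visibly independent of $K$; the characters of the dual Weyl modules are given by the Weyl character formula, hence also independent of $K$, and they are linearly independent (since this holds over $\C$); because $K[\Rep(Q,\alpha)]_d$ has a good filtration (Proposition~\ref{gfrep}), its character is an $\N$-linear combination of $\nabla$-characters whose coefficients are precisely the filtration multiplicities, so all of these multiplicities --- in particular that of the trivial module --- are field-independent, and Proposition~\ref{good.filt.prop}(3) then gives that $\dim(K[\Rep(Q,\alpha)]_d)^{\SL(\alpha)}$ (resp.\ $^{\GL(\alpha)}$) is independent of $K$. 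Note that this character argument is exactly what justifies the assertion you rely on but do not prove, namely that ``the good filtration multiplicities of $K[\Rep(Q,\alpha)]$ are literally the characteristic-$0$ decomposition multiplicities''; without it, your step claiming that refining the arrow-wise filtration from Theorem~\ref{ABWAB} into dual Weyl modules for $\GL(\alpha)$ introduces only characteristic-free combinatorics (a statement about good filtrations of tensor products of mixed Schur functors) is precisely the unestablished point, so you should either cite it or replace it by the character comparison. Your weight-space treatment of $\SI(Q,\alpha)$ via twisting by the one-dimensional characters $\chi_\sigma$ is valid (they are good modules, and Proposition~\ref{good.filt.prop}(1),(3) apply), but it is more bookkeeping than needed: the paper simply runs the same multiplicity argument with the group $\SL(\alpha)$, where the trivial-module multiplicity in $K[\Rep(Q,\alpha)]_d$ directly computes the degree-$d$ dimension of $\SI(Q,\alpha)$.
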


\begin{proof}
We only show that the $H(\SI(Q,\alpha),t)$ is independent of the underlying field $K$, as the argument for $H(\I(Q,\alpha),t)$ is similar. 

It suffices to show that the filtration multiplicity of the trivial module in $K[\Rep(Q,\alpha)]_d$ is independent of the underlying field by part $(3)$ of Proposition~\ref{good.filt.prop}. Indeed, observe that the character of $K[\Rep(Q,\alpha)]_d$ is independent of the underlying field. Further, the characters of the dual Weyl modules are given by the Weyl character formula (see \cite{Donkin2}), and hence independent of the underlying field. The characters of the dual Weyl modules are linearly independent, since it is true over $\C$. We can write the character of $K[\Rep(Q,\alpha)]_d$ as a $\N$-linear combination of the characters of the dual Weyl modules since $K[\Rep(Q,\alpha)]_d$ has a good filtration. The multiplicities of the dual Weyl modules are given by the coefficients of this linear combination, which is clearly independent of the underlying field. 

\end{proof}

Applying the above Proposition to the $m$-Kronecker quiver and the $m$-loop quiver, we get the following:

\begin{corollary} \label{Hilb.msi}
The Hilbert series $H(R(n,m),t)$ and $H(S(n,m),t)$ are independent of the underlying field $K$. 
\end{corollary}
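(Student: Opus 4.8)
The plan is to obtain this as an immediate specialization of Proposition~\ref{Hilb.si}. First I would recall the two identifications set up in Section~1. For the $m$-loop quiver $Q$ (one vertex, $m$ arrows from it to itself) with dimension vector $n \in \N$, one has $\Rep(Q,n) = \M_{n,n}^m$ with $\GL(\alpha) = \GL_n$ acting by simultaneous conjugation, so $\I(Q,n) = K[\M_{n,n}^m]^{\GL_n} = S(n,m)$. For the $m$-Kronecker quiver $Q'$ (two vertices $x,y$ and $m$ arrows from $x$ to $y$) with dimension vector $\alpha = (n,n)$, one has $\Rep(Q',\alpha) = \M_{n,n}^m$ with $\SL(\alpha) = \SL_n \times \SL_n$, so $\SI(Q',\alpha) = K[\M_{n,n}^m]^{\SL_n \times \SL_n} = R(n,m)$.

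Next I would observe that both identifications are isomorphisms of graded $K$-algebras for the total-degree grading, and that this is exactly the grading with respect to which Proposition~\ref{Hilb.si} asserts field-independence: the degree-$d$ component $K[\Rep(Q,\alpha)]_d = \bigoplus_{\sum_a t_a = d} \bigotimes_a \Sym^{t_a}(V(ta) \otimes V(ha)^*)$ is precisely the total-degree-$d$ piece, and $\I(Q,\alpha)_d$, $\SI(Q,\alpha)_d$ are its $\GL(\alpha)$- and $\SL(\alpha)$-invariants respectively. Hence $H(S(n,m),t) = H(\I(Q,n),t)$ and $H(R(n,m),t) = H(\SI(Q',\alpha),t)$. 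Invoking Proposition~\ref{Hilb.si}, the right-hand sides of these two identities do not depend on the underlying field $K$, and the corollary follows.

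There is no genuine obstacle here: the argument is bookkeeping with definitions, the only point needing (minor) care being the compatibility of the quiver-theoretic grading with the standard grading on $K[\M_{n,n}^m]$. Alternatively, one could bypass the reduction and simply rerun the proof of Proposition~\ref{Hilb.si} directly in these two cases: the character of $K[\M_{n,n}^m]_d$ as a $\GL_n$-module (respectively $\SL_n \times \SL_n$-module) is manifestly field-independent, $K[\M_{n,n}^m]$ has a good filtration by Proposition~\ref{gfrep}, the dual Weyl characters are given by the Weyl character formula and are linearly independent, so the multiplicity of the trivial module, which equals $\dim S(n,m)_d$ (respectively $\dim R(n,m)_d$), is independent of $K$ — but this is merely the special case of an already-established statement.
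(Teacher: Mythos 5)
Your proposal is correct and matches the paper's argument exactly: the paper obtains this corollary by applying Proposition~\ref{Hilb.si} to the $m$-loop quiver (for $S(n,m)$) and the $m$-Kronecker quiver (for $R(n,m)$), which is precisely your reduction. The extra remarks on grading compatibility and the optional direct rerun of the argument are fine but not needed.
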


\section{Matrix invariants}
Procesi showed that the invariant ring $S(n,m)$ is generated by traces of monomials in characteristic $0$, see \cite{Procesi}. Using the theory of good filtrations, Donkin extended Procesi's result to all characteristics (see \cite{Donkin}) by taking characteristic coefficients rather than traces. For an $n \times n$ matrix $A$, we have $\det(tI_n - A) = \sum_{j \in \N} \sigma_j(A) t^j$.  $\sigma_j$ is called the $j^{th}$ characteristic coefficient of $A$.

\begin{theorem} [Donkin]
The ring $S(n,m)$ is generated by $\sigma_s (X_{i_1}X_{i_2}\dots X_{i_k})$, $s \geq 0$.
\end{theorem}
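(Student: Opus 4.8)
The plan is to upgrade Procesi's first fundamental theorem for the conjugation action (the structural statement underlying Theorem~\ref{PR}) to arbitrary characteristic, using the good filtration on $K[\M_{n,n}^m]$ as the transfer mechanism and using the characteristic coefficients $\sigma_s$ in place of traces. The substitution of $\sigma_s$ for $\trace$ is forced on us: when $\kar K$ divides $n$, the invariant $\det(X)$ is not a polynomial in the $\trace(X^k)$, so traces of monomials cannot generate $S(n,m)$ in general; characteristic coefficients, on the other hand, are defined over $\Z$, specialize well under reduction, and already contain every mixed trace since $\trace(M)=\sigma_1(M)$.

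First I would dispose of $m=1$: under conjugation $K[\M_{n,n}]^{\GL_n}$ is the algebra of symmetric functions of the eigenvalues, freely generated over \emph{any} field by $\sigma_1(X),\dots,\sigma_n(X)$, because the ring of symmetric polynomials is generated over $\Z$ by the elementary symmetric polynomials; Newton's identities, which in this direction involve no denominators, then exhibit each $\trace(X^k)$ as a $\Z$-polynomial in $\sigma_1(X),\dots,\sigma_k(X)$. For general $m$, form the generic matrices $X_1,\dots,X_m$ with indeterminate entries over $\Z$ and let $T_\Z\subseteq \Z[\M_{n,n}^m]$ be the $\Z$-subalgebra generated by all $\sigma_s(X_{i_1}\cdots X_{i_k})$. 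Procesi's theorem over $\Q$ says $\Q[\M_{n,n}^m]^{\GL_n}$ is generated by the traces $\trace(X_{i_1}\cdots X_{i_k})=\sigma_1(X_{i_1}\cdots X_{i_k})$, and since each such trace lies in $T_\Z\otimes_\Z\Q$ we get $T_\Z\otimes_\Z\Q=\Q[\M_{n,n}^m]^{\GL_n}$.

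It then remains to show that $T_\Z\otimes_\Z K$ surjects onto $K[\M_{n,n}^m]^{\GL_n}$ when $\kar K=p$. Here I would invoke Proposition~\ref{gfrep} for the $m$-loop quiver: $K[\M_{n,n}^m]$ is a good $\GL_n$-module for the conjugation action, and the Schur/Weyl modules entering its good filtration are defined over $\Z$, so the invariants can be extracted multidegree by multidegree from short exact sequences that split over $\Z$; consequently $(\Z[\M_{n,n}^m]^{\GL_n})_d$ is a direct summand of $(\Z[\M_{n,n}^m])_d$ and its formation commutes with $-\otimes_\Z K$ (this is also the mechanism behind Corollary~\ref{Hilb.msi}). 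Thus $K[\M_{n,n}^m]^{\GL_n}=\Z[\M_{n,n}^m]^{\GL_n}\otimes_\Z K$, and it suffices to prove that $\Z[\M_{n,n}^m]^{\GL_n}$ is generated as a $\Z$-algebra by characteristic coefficients of monomials, equivalently that the cokernel of $T_\Z\hookrightarrow \Z[\M_{n,n}^m]^{\GL_n}$ is torsion free (it already vanishes after $-\otimes_\Z\Q$ by the previous paragraph). This last point is the main obstacle: one must show it is precisely the characteristic coefficients, and not some other integral invariants, that are needed — i.e. that in each multidegree they span a good-filtration-compatible integral spanning set of the invariants, so that their reductions still span in every characteristic. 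A convenient handle is the identity
$$\sigma_s(X_{i_1}\cdots X_{i_k})=\trace\left(\textstyle\bigwedge^s(X_{i_1})\cdots\bigwedge^s(X_{i_k})\right),$$
which rewrites each generator as a mixed trace of the exterior-power liftings acting on $\bigwedge^s V$, reducing the claim to an FFT-type spanning statement on the good $\GL_n$-modules $\bigwedge^s V$; proving that spanning statement compatibly with the good filtration (equivalently, analyzing the conjugation coaction on the bialgebra $K[\M_{n,n}]$ and its truncations) is Donkin's route and is where the real work lies.
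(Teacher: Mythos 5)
The paper does not prove this statement at all: it is quoted verbatim from Donkin's paper \cite{Donkin}, so there is no in-house argument to measure you against, and your sketch has to stand or fall on its own. The scaffolding you build is reasonable and is in fact consistent with how Donkin's proof is organized: the $m=1$ case via elementary symmetric functions is characteristic-free; Procesi's Theorem~\ref{PR} does give $T_\Z\otimes_\Z\Q=\Q[\M_{n,n}^m]^{\GL_n}$; and the base-change step $K[\M_{n,n}^m]^{\GL_n}=\Z[\M_{n,n}^m]^{\GL_n}\otimes_\Z K$ is legitimate, since invariance over $\Z$ is a saturated condition (so the $\Z$-form of the invariants is pure in each degree) and surjectivity follows by the dimension count that Proposition~\ref{gfrep} and the character argument behind Corollary~\ref{Hilb.msi} make available.

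The problem is that after these reductions you arrive at exactly the statement to be proved and then stop: the vanishing of the cokernel of $T_\Z\hookrightarrow\Z[\M_{n,n}^m]^{\GL_n}$, i.e.\ that in positive characteristic the characteristic coefficients of monomials still span the invariants, is the entire content of Donkin's theorem, and you explicitly defer it (``is Donkin's route and is where the real work lies''). Nothing in the earlier steps makes this routine: Procesi's characteristic-zero argument gives no control over denominators, and the identity $\sigma_s(X_{i_1}\cdots X_{i_k})=\trace\bigl(\bigwedge^s(X_{i_1})\cdots\bigwedge^s(X_{i_k})\bigr)$ only rewrites the proposed generators; it does not produce the FFT-type spanning statement you need, which in \cite{Donkin} requires a genuine analysis of the conjugation module structure of $K[\M_{n,n}^m]$ (its good filtration with explicit factors and an inductive saturation argument in the coalgebra $K[\M_{n,n}]$). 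So what you have is a correct framing and reduction, not a proof; to be complete it would have to either reproduce that core argument or, as the paper does, simply cite \cite{Donkin} for it.
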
 

Along with Le~Bruyn, Procesi extended his result to invariants of quivers in characteristic $0$ in \cite{LP}, and Donkin did the same for his result in arbitrary characteristic in \cite{Donkin3}.

\begin{theorem} [Donkin]
The ring $\I(Q,\alpha)$ is generated by $$\{ \sigma_j(V(a_n)V(a_{n-1}) \dots V(a_1))\  |\   j \geq 0, \  a_1a_2\dots a_n \text{ oriented cycle in } Q\}.$$ 
\end{theorem}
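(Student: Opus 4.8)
The plan is to reduce the statement to Donkin's theorem for matrix invariants (the previous theorem, applied to $S(N,M+r)$, where $N=\sum_{i\in Q_0}\alpha_i$, $M=|Q_1|$ and $r=|Q_0|$) by way of the idempotent trick of Le~Bruyn and Procesi \cite{LP}, being careful about the one point in their argument that uses characteristic zero. Fix a decomposition $K^N=\bigoplus_{i\in Q_0}V(i)$ and let $e_i\in\M_{N,N}$ be the corresponding orthogonal idempotents, so $\sum_i e_i=I_N$ and $\GL(\alpha)=\prod_i\GL(V(i))$ is exactly the stabilizer in $\GL_N$, acting by conjugation, of the tuple $(e_1,\dots,e_r)$. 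For an arrow $a$ with $ta=i$, $ha=j$ and a linear map $V(a)\colon V(i)\to V(j)$, let $\widehat{V(a)}\in\M_{N,N}$ be the matrix equal to $V(a)$ on the $(j,i)$-block and zero elsewhere, so that $\widehat{V(a)}=e_{ha}\widehat{V(a)}e_{ta}$. Together these give a $\GL(\alpha)$-equivariant linear embedding
$$\phi\colon\Rep(Q,\alpha)\hookrightarrow\M_{N,N}^{M+r},\qquad (V(a))_{a\in Q_1}\longmapsto\big((\widehat{V(a)})_{a\in Q_1},\,e_1,\dots,e_r\big),$$
where $\GL(\alpha)$ acts on the target through $\GL(\alpha)\hookrightarrow\GL_N$ and simultaneous conjugation.

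First I would show that pullback along $\phi$ induces a surjection $S(N,M+r)=K[\M_{N,N}^{M+r}]^{\GL_N}\twoheadrightarrow K[\Rep(Q,\alpha)]^{\GL(\alpha)}=\I(Q,\alpha)$. This map factors as $K[\M_{N,N}^{M+r}]^{\GL_N}\to K[\M_{N,N}^{M}]^{\GL(\alpha)}\to K[\Rep(Q,\alpha)]^{\GL(\alpha)}$, the first map specializing the last $r$ matrix arguments to $e_1,\dots,e_r$ and the second restricting along the block inclusion $\Rep(Q,\alpha)\hookrightarrow\M_{N,N}^{M}$. The second map is split surjective, since this block inclusion has a $\GL(\alpha)$-equivariant linear retraction (project each matrix onto its $(ha,ta)$-block). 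For the first map, the $\GL_N$-orbit $\mathcal{O}$ of $(e_1,\dots,e_r)$ is closed in $\M_{N,N}^{r}$ and $\GL_N$-equivariantly isomorphic to $\GL_N/\GL(\alpha)$; since $\M_{N,N}^{M}\times\mathcal{O}\cong\GL_N\times^{\GL(\alpha)}\M_{N,N}^{M}$ one gets $K[\M_{N,N}^{M}\times\mathcal{O}]^{\GL_N}\cong K[\M_{N,N}^{M}]^{\GL(\alpha)}$, and under this identification the first map becomes restriction of $\GL_N$-invariants from $\M_{N,N}^{M+r}$ to the closed $\GL_N$-stable subvariety $\M_{N,N}^{M}\times\mathcal{O}$. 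So the whole reduction hinges on that restriction being surjective on invariants.

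Next I would carry out the combinatorial translation of generators. By Donkin's matrix-invariant theorem, $S(N,M+r)$ is generated by the characteristic coefficients $\sigma_s(W)$ of words $W$ in the $M+r$ generic matrices. Applying $\phi^*$ turns $W$ into a word in the matrices $\widehat{V(a)}$ and the idempotents $e_i$; using $e_ie_j=\delta_{ij}e_i$, $\widehat{V(a)}=e_{ha}\widehat{V(a)}e_{ta}$, and the cyclic symmetry $\sigma_s(AB)=\sigma_s(BA)$, every such word either collapses to a scalar in $K$ or is cyclically equivalent to a product $\widehat{V(a_n)}\cdots\widehat{V(a_1)}$ where $a_1a_2\cdots a_n$ is an oriented cycle of $Q$. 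If $P:=V(a_n)\cdots V(a_1)$ denotes the corresponding endomorphism of $V(ta_1)$, then $\det(tI_N-\widehat{P})=t^{\,N-\alpha_{ta_1}}\det(tI_{\alpha_{ta_1}}-P)$, so $\sigma_s(\widehat{P})$ is either $0$ or equals some $\sigma_j(P)$. Hence the images of Donkin's generators are precisely the elements $\sigma_j(V(a_n)\cdots V(a_1))$ appearing in the statement (together with some scalars), and by the previous paragraph these generate $\I(Q,\alpha)$.

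The hard part is the surjectivity of restriction of $\GL_N$-invariants to the closed $\GL_N$-stable subvariety $\M_{N,N}^{M}\times\mathcal{O}\subseteq\M_{N,N}^{M+r}$. In characteristic $0$ this is automatic from the Reynolds operator, and this is exactly where \cite{LP} uses $\kar K=0$. In positive characteristic it must come from the theory of good filtrations: by Proposition~\ref{gfrep} (applied to the one-vertex quiver with $M+r$ loops) $K[\M_{N,N}^{M+r}]$ is a good $\GL_N$-module, and hence by Proposition~\ref{good.filt.prop}(5) so is $K[\M_{N,N}^{M}\times\mathcal{O}]$; the remaining point is to show that the defining ideal of $\M_{N,N}^{M}\times\mathcal{O}$ inside $\M_{N,N}^{M+r}$ again admits a good filtration, for then the vanishing of $H^1(\GL_N,-)$ on modules with a good filtration forces the restriction map to be surjective on invariants. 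Establishing the goodness of this ideal — equivalently, a good-filtration analysis of the variety of $r$-tuples of orthogonal idempotents summing to the identity (and its products with affine space) — is the technical core of Donkin's argument in \cite{Donkin3}, and is where essentially all the difficulty lies.
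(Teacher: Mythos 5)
First, note that the paper itself gives no proof of this statement: it is quoted from Donkin \cite{Donkin3}, so there is no internal argument to compare against, and in fact the paper's Corollary~\ref{mi.iq} runs in the opposite logical direction to yours --- it \emph{uses} this theorem (together with Donkin's matrix-invariants theorem) to deduce a surjection $S(N,M)\twoheadrightarrow \I(Q,\alpha)$, rather than deducing the theorem from a surjection. Your reduction itself is sound and is exactly the Le~Bruyn--Procesi scheme: the idempotent embedding $\Rep(Q,\alpha)\hookrightarrow \M_{N,N}^{M+r}$, the identification of $\M_{N,N}^{M}\times\mathcal{O}$ with $\GL_N\times^{\GL(\alpha)}\M_{N,N}^{M}$, the split block retraction, and the cyclic-word bookkeeping showing that Donkin's generators of $S(N,M+r)$ pull back to scalars and to $\sigma_j$ of oriented cycles are all correct, and in characteristic $0$ this is a complete proof.

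The genuine gap is the step you yourself flag: surjectivity of restriction of $\GL_N$-invariants from $\M_{N,N}^{M+r}$ to the closed subvariety $\M_{N,N}^{M}\times\mathcal{O}$ in positive characteristic. This is precisely the content of the theorem beyond \cite{LP}, and your proposal does not establish it. Moreover, the way you invoke the paper's tools is circular: Proposition~\ref{good.filt.prop}(5) only gives goodness of a quotient $V/W$ when the submodule $W$ is already known to be good, so you cannot conclude that $K[\M_{N,N}^{M}\times\mathcal{O}]$ is a good $\GL_N$-module from Proposition~\ref{gfrep} alone --- the goodness of the defining ideal (equivalently, enough to force $H^1(\GL_N,I)=0$ and hence surjectivity on invariants) is exactly the ``remaining point'' you defer, and nothing in this paper supplies it. A correct completion would have to prove, say, that $(\GL_N,\GL(\alpha))$ is a good filtration pair so that $K[\mathcal{O}]=K[\GL_N/\GL(\alpha)]$ and then the ideal of $\M_{N,N}^{M}\times\mathcal{O}$ admit good filtrations; as written, your argument reduces the theorem to this unproven core rather than proving it.
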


Define $V = \bigoplus_{i \in Q_0} V(i)$. For each arrow $a \in Q_1$, we have a natural embedding
 $$\Hom(V(ta),V(ha)) \hookrightarrow \Hom(V,V).$$
  Putting together the maps for each arrow, we get
 $$\Rep(Q,\alpha) \hookrightarrow \Hom(V,V)^{\oplus M},$$ 
where $M = |Q_1|$. In short, the above two results of Donkin give us the following:

\begin{corollary} \label{mi.iq}
There is a surjection of graded rings $S(N,M) \twoheadrightarrow I(Q,\alpha)$, where $N = \sum_{i \in Q_0} \alpha_i$ and $M = |Q_1|$.
\end{corollary}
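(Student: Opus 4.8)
The plan is to assemble the two results of Donkin quoted just above into a single surjection of graded rings. I would argue as follows. First I would fix the embedding $\Rep(Q,\alpha) \hookrightarrow \Hom(V,V)^{\oplus M}$ described above, where $V = \bigoplus_{i\in Q_0} V(i)$, $N = \dim V = \sum_{i\in Q_0}\alpha_i$ and $M = |Q_1|$: an arrow $a$ determines the block $\Hom(V(ta),V(ha))\subseteq \Hom(V,V)$ (all other blocks zero), and putting the $M$ arrows together gives a closed $\GL(\alpha)$-equivariant embedding, where $\GL(\alpha)$ acts on $\Hom(V,V)^{\oplus M} = \M_{N,N}^M$ through the block-diagonal inclusion $\GL(\alpha)\hookrightarrow \GL_N$ and the simultaneous conjugation action. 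Since the conjugation action of $\GL_N$ restricts on the image to the $\GL(\alpha)$-action on $\Rep(Q,\alpha)$, restriction of functions gives a surjection of graded rings $K[\M_{N,N}^M] \twoheadrightarrow K[\Rep(Q,\alpha)]$.

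Next I would check that this restriction map carries invariants to invariants. Because the embedding is $\GL(\alpha)$-equivariant, the restriction of any $\GL_N$-invariant function on $\M_{N,N}^M$ is a $\GL(\alpha)$-invariant function on $\Rep(Q,\alpha)$, so we get an induced map $S(N,M) = K[\M_{N,N}^M]^{\GL_N} \to K[\Rep(Q,\alpha)]^{\GL(\alpha)} = \I(Q,\alpha)$ of graded rings. The content of the corollary is that this map is \emph{surjective}, and this is exactly where Donkin's two generation theorems enter. By Donkin's theorem for quivers, $\I(Q,\alpha)$ is generated by the elements $\sigma_j(V(a_n)\cdots V(a_1))$ for oriented cycles $a_1\cdots a_n$ in $Q$ and $j\geq 0$. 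Under the embedding, the product $V(a_n)\cdots V(a_1)$ along an oriented cycle is exactly the product $X_{i_n}\cdots X_{i_1}$ of the corresponding block matrices in $\M_{N,N}$, taken in the same order; because $a_1\cdots a_n$ is an oriented cycle the heads and tails match up so this matrix product is again supported in a single diagonal block and its characteristic coefficients $\sigma_j$ computed in $\M_{N,N}$ agree (up to the obvious padding by an identity block) with those computed in the smaller block. Hence each generator $\sigma_j(V(a_n)\cdots V(a_1))$ of $\I(Q,\alpha)$ is the restriction of the invariant $\sigma_j(X_{i_n}\cdots X_{i_1})\in S(N,M)$, which by Donkin's theorem for matrix invariants lies in $S(N,M)$. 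Thus the image of $S(N,M)\to \I(Q,\alpha)$ contains a generating set, so the map is surjective.

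Finally I would note that the map is visibly a homomorphism of \emph{graded} rings: the embedding is linear, so restriction preserves the polynomial degree, and Donkin's generating invariants $\sigma_j$ of a degree-$k$ monomial are homogeneous of degree $k$ on both sides, which is all that is needed for the later degree-transfer arguments (Theorem~\ref{mi.bds} implies Corollary~\ref{inv.bds}).

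The step I expect to require the most care is the bookkeeping in the middle paragraph: verifying that the characteristic coefficient $\sigma_j$ of a block-supported $N\times N$ matrix coincides with the characteristic coefficient of the corresponding smaller square block, and that an oriented cycle in $Q$ really does produce a block-diagonal (hence square-block-supported) product in $\M_{N,N}$. Concretely, if $M$ is an $N\times N$ matrix all of whose nonzero entries lie in a $\alpha_i\times\alpha_i$ diagonal block $M'$, then $\det(tI_N - M) = t^{N-\alpha_i}\det(tI_{\alpha_i} - M')$, so $\sigma_j(M) = \sigma_{j-(N-\alpha_i)}(M')$; one should state this normalization precisely so the generating sets on the two sides are matched correctly. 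Everything else — equivariance of the embedding, surjectivity of restriction onto a closed subvariety, compatibility with grading — is routine once this identification is in place.
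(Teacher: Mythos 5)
Your proposal is correct and follows exactly the paper's route: the paper also obtains the surjection by restricting along the block embedding $\Rep(Q,\alpha)\hookrightarrow \M_{N,N}^M$ and observing that Donkin's generators $\sigma_j$ of $\I(Q,\alpha)$ (characteristic coefficients along oriented cycles) are restrictions of Donkin's generators of $S(N,M)$. Your extra bookkeeping about $\sigma_j$ of a block-supported matrix (the shift $\sigma_j(M)=\sigma_{j-(N-\alpha_i)}(M')$) is a correct elaboration of what the paper leaves implicit.
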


\section{Semi-invariants}
For semi-invariants, there exist determinantal descriptions of the invariants (see \cite{DW,DZ,SV}), but we will not recall them as we do not need them explicitly. We refer to \cite{DM} for details consistent with our notation. We will however recall the following crucial result from \cite{DM}.

\begin{theorem} [Derksen-Makam] \label{nullcone.bound}
Assume $n \geq 2$. Let $r =$ Krull dimension of $R(n,m)$. Then the null cone $\mathcal{N}(SL_n \times SL_n, \M_{n,n}^m)$ is defined by $r$ invariants of degree $n(n-1)$.
\end{theorem}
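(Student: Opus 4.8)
The plan is to first pin down $\mathcal N := \mathcal N(\SL_n\times\SL_n,\M_{n,n}^m)$ as the common zero locus of an explicit family of semi-invariants of degree $n(n-1)$, and then to trim that family down to $r = \dim R(n,m)$ members by a Krull-dimension count.

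\emph{Step 1: an explicit defining family.} By the determinantal description of matrix semi-invariants (see \cite{DW,DZ,SV}), $R(n,m)$ is spanned, together with the constants, by the invariants
$$f_{C_1,\dots,C_m}(X_1,\dots,X_m)\;=\;\det\!\big(C_1\otimes X_1+\dots+C_m\otimes X_m\big),\qquad d\ge 1,\ C_i\in\M_{d,d}(K),$$
and $f_{C_1,\dots,C_m}$ is homogeneous of degree $nd$; hence $\mathcal N$ is precisely the common zero locus of all the $f_{C_1,\dots,C_m}$. I would show that the subfamily with $d=n-1$ already cuts out $\mathcal N$. This rests on two inputs. First, the Hilbert--Mumford criterion for the trivial character identifies $\M_{n,n}^m\setminus\mathcal N$ with the tuples $(X_1,\dots,X_m)$ not degenerated to $0$ by any nontrivial one-parameter subgroup of $\SL_n\times\SL_n$; writing such a subgroup (after a change of basis) as a pair of integer weight vectors $(a_q),(b_p)$ with $\sum a_q=\sum b_p=0$ not both zero, degenerating to $0$ amounts to $(X_i)_{pq}\ne 0\Rightarrow b_p>a_q$, and a threshold argument on the subspaces $\spa\{e_q:a_q\ge c\}$ and $\spa\{e_p:b_p>c\}$ (comparing $\sum_\ell|\{q:a_q\ge\ell\}|$ with $\sum_\ell|\{p:b_p\ge\ell\}|$, both equal to $n$ times the length of a large symmetric range of $\ell$ because the weights sum to zero) shows this happens exactly when $\spa(X_1,\dots,X_m)\subseteq\M_{n,n}$ has a shrunk subspace, i.e.\ fails to have full non-commutative rank $n$; this equivalence may also simply be quoted from \cite{DW,King}. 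Second, I would invoke the regularity/blow-up bound underlying \cite{DM}: a matrix space $\mathcal L\subseteq\M_{n,n}$ has full non-commutative rank iff $\det(C_1\otimes X_1+\dots+C_m\otimes X_m)\ne 0$ for generic $C_i\in\M_{n-1,n-1}(K)$, that is, the blow-up parameter $n-1$ always suffices once the rank is full. Combining the two, $\mathcal N$ equals the common zero locus of the finite-dimensional subspace $W\subseteq R(n,m)_{n(n-1)}$ spanned by $\{\,f_{C_1,\dots,C_m}:C_i\in\M_{n-1,n-1}(K)\,\}$.

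\emph{Step 2: trimming $W$ to $r$ semi-invariants.} By definition $\mathcal N$ is the zero locus on $\M_{n,n}^m$ of the irrelevant ideal $R(n,m)_+$, and the quotient morphism $\pi\colon\M_{n,n}^m\to\operatorname{Spec}R(n,m)$ is surjective; so the equality ``$\mathcal N=$ zero locus of $W$'' forces $\sqrt{(W)}=R(n,m)_+$ as ideals of $R(n,m)$. Now $R(n,m)$ is a finitely generated graded $K$-algebra of Krull dimension $r$ over the algebraically closed (hence infinite) field $K$, the space $W$ lies in the single degree $n(n-1)$, and $(W)$ has radical the irrelevant ideal; so the standard prime-avoidance/homogeneous-system-of-parameters argument produces $\theta_1,\dots,\theta_r\in W$ with $\dim R(n,m)/(\theta_1,\dots,\theta_r)=0$, chosen inductively so that $\theta_{j+1}$ avoids every minimal prime of $R(n,m)/(\theta_1,\dots,\theta_j)$ — possible since the image of $W$ still generates an ideal with radical the irrelevant ideal. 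Pulling back along $\pi$, the common zero locus of $\theta_1,\dots,\theta_r$ on $\M_{n,n}^m$ is $\pi^{-1}(\text{vertex})=\mathcal N$, so $\theta_1,\dots,\theta_r$ are $r$ semi-invariants of degree $n(n-1)$ defining $\mathcal N$, as required. (The hypothesis $n\ge 2$ is used only to ensure $n(n-1)>0$, so that the $f_{C_1,\dots,C_m}$ are genuinely positive-degree invariants.)

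The one genuinely hard ingredient is the blow-up bound in Step 1 — that full non-commutative rank of a subspace of $\M_{n,n}$ can always be certified by $(n-1)\times(n-1)$ coefficient matrices — which is exactly where the exponent in the degree $n(n-1)$ comes from; by contrast the Hilbert--Mumford translation and the prime-avoidance count are routine.
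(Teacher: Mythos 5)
Your proposal is correct and follows essentially the route of the source: the present paper gives no proof of this statement (it simply recalls it from \cite{DM}), and there the argument is exactly your chain --- Hilbert--Mumford identifies the null cone with tuples whose span admits a shrunk subspace (non-commutative rank $<n$), the blow-up/regularity theorem certifies full non-commutative rank already at blow-up size $n-1$, hence by $(n-1)\times(n-1)$ determinantal semi-invariants of degree $n(n-1)$, and graded prime avoidance over the infinite field $K$ extracts from that single-degree linear system a homogeneous system of parameters of length $r=\dim R(n,m)$ cutting out the null cone. The only caveats are that the genuinely hard ingredient, the $d=n-1$ blow-up bound, is invoked from \cite{DM} rather than proved (which matches how this paper itself treats the theorem), and, as a minor point, in the prime-avoidance step one should only require $\theta_{j+1}$ to avoid the minimal primes of positive dimension, since the irrelevant maximal ideal can never be avoided by positive-degree elements.
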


We also recall a result of Domokos in \cite{Domokos1} which allows us to give bounds for invariants using the bounds on semi-invariants. 

Consider the injection $\phi: \M_{n,n}^m \rightarrow \M_{n,n}^{m+1}$ given by $(X_1,X_2,\dots,X_m) \mapsto (I,X_1,X_2,\dots,X_m)$. This gives a surjective map on the coordinate rings $\phi^* :K[\M_{n,n}^{m+1}] \rightarrow K[\M_{n,n}^m]$. More precisely, for $f \in K[\M_{n,n}^m]$, we have $\phi^* (f) (X_1,X_2,\dots,X_m) = f(I,X_1,\dots,X_m)$. Domokos showed that $\phi^*$ descends to a map on invariant rings as given below:

\begin{proposition} [Domokos] \label{msi.mi}
We have a surjection $\phi^* : R(n,m+1) \twoheadrightarrow S(n,m)$.
\end{proposition}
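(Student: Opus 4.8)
The plan is to exhibit $\phi^*$ concretely on generators and verify three things: that $\phi^*$ carries $R(n,m+1)$ into $S(n,m)$, that the image is all of $S(n,m)$, and that what we get really is a well-defined ring homomorphism (the last being automatic since $\phi^*$ is already a ring map on the full coordinate rings and we are just restricting it). The key observation is the following compatibility of group actions. The embedded diagonal $\GL_n \hookrightarrow \GL_n \times \GL_n$, $g \mapsto (g,g)$, acts on $\M_{n,n}^m$ by simultaneous conjugation, and the point $(I, X_1, \dots, X_m)$ has the property that conjugating by $(g,g)$ sends it to $(I, gX_1g^{-1}, \dots, gX_mg^{-1})$ — the identity coordinate is preserved. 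So $\phi$ is equivariant for the diagonal $\GL_n$-action on the target $\M_{n,n}^{m+1}$ (with the two $\SL_n$ factors acting as $(g,g)$) and the conjugation $\GL_n$-action on $\M_{n,n}^m$.

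First I would check that $\phi^*$ maps semi-invariants to invariants. Let $f \in R(n,m+1) = K[\M_{n,n}^{m+1}]^{\SL_n \times \SL_n}$. For $g \in \SL_n$, and $(X_1,\dots,X_m) \in \M_{n,n}^m$, we compute $\phi^*(f)(gX_1g^{-1},\dots,gX_mg^{-1}) = f(I, gX_1g^{-1},\dots,gX_mg^{-1}) = f\big((g,g)\cdot(I,X_1,\dots,X_m)\big) = f(I,X_1,\dots,X_m) = \phi^*(f)(X_1,\dots,X_m)$, using that $(g,g) \in \SL_n \times \SL_n$ and $f$ is $\SL_n\times\SL_n$-invariant. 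Hence $\phi^*(f) \in K[\M_{n,n}^m]^{\SL_n} = S(n,m)$ (recall $S(n,m)$ is the ring of invariants under simultaneous conjugation, which is the diagonal $\SL_n$). Also $\phi^*$ is degree-nonincreasing, in fact it takes the degree-$d$ part of $R(n,m+1)$ into degrees $\le d$ of $S(n,m)$ because substituting constants for the entries of the first matrix can only lower degree.

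Next I would prove surjectivity. Here I use Donkin's generators: $S(n,m)$ is spanned by the characteristic coefficients $\sigma_s(X_{i_1}X_{i_2}\cdots X_{i_k})$ with $1 \le i_j \le m$. By Donkin's theorem applied to $R(n,m+1)$ — more precisely, the determinantal / characteristic-coefficient description of matrix semi-invariants, or simply the observation that $S(n,m+1)$ surjects onto $R(n,m+1)$'s relevant part — the point is just that $\sigma_s(X_{i_1}\cdots X_{i_k})$, viewed as a function on $\M_{n,n}^{m+1}$ in the matrices $X_1, \dots, X_{m+1}$, is in fact a semi-invariant of weight zero, hence lies in $R(n,m+1)$; and its image under $\phi^*$, i.e. setting $X_1 = I$ and relabeling $X_2,\dots,X_{m+1}$ as $X_1,\dots,X_m$, is again a characteristic coefficient of a monomial (an $I$ in the middle of a word just drops out: $\sigma_s(\cdots A I B \cdots) = \sigma_s(\cdots AB\cdots)$). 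So every Donkin generator of $S(n,m)$ is hit — indeed hit by a generator of the same or smaller degree. Since these generate $S(n,m)$ as an algebra and $\phi^*$ is a ring map, $\phi^*$ is surjective. The main obstacle, and the point needing the most care, is justifying that the characteristic coefficients $\sigma_s$ of monomials in $X_1,\dots,X_{m+1}$ genuinely are $\SL_n \times \SL_n$-semi-invariants (weight-zero invariants under each factor) on $\M_{n,n}^{m+1}$ — this is exactly Donkin's description of matrix invariants combined with the fact that conjugation-invariants of $n\times n$ matrices sit inside $R(n,\cdot)$ as the weight-zero part — so I would cite Donkin's theorem (already quoted above in the excerpt) rather than reprove it, and then the rest is the elementary word-combinatorics of deleting identity factors.
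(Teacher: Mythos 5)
The first half of your argument is fine: $\phi$ intertwines the conjugation action of $g$ on $\M_{n,n}^m$ with the action of $(g,g)$ on $\M_{n,n}^{m+1}$, so $\phi^*$ maps $R(n,m+1)$ into the ring of conjugation invariants (and since $\GL_n=K^*\cdot\SL_n$ and scalars act trivially by conjugation, $\SL_n$-invariance does give membership in $S(n,m)=K[\M_{n,n}^m]^{\GL_n}$). This matches the easy half; the paper itself does not reprove the proposition but cites Domokos, so the real content you must supply is surjectivity, and that is where your argument breaks.

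The gap is your claim that $\sigma_s(X_{i_1}\cdots X_{i_k})$, viewed as a function on $\M_{n,n}^{m+1}$, lies in $R(n,m+1)$. It does not: $R(n,m+1)$ consists of invariants for the \emph{left--right} action $(A,B)\cdot X_i = BX_iA^{-1}$, under which a word transforms as $X_{i_1}\cdots X_{i_k}\mapsto BX_{i_1}A^{-1}BX_{i_2}A^{-1}\cdots$, so its characteristic coefficients are not preserved (already $\trace(X_1)$ fails for $n\geq 2$; for $n=2,m=0$ one has $R(2,1)=K[\det X_1]$, which contains no traces). Likewise the assertion that conjugation invariants form the weight-zero part of $R(n,\cdot)$ is false: the Kronecker quiver has no oriented cycles, so the weight-zero component is just $K$. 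Consequently your proposed preimages of Donkin's generators are not semi-invariants and surjectivity is not established. The missing idea is to twist by the extra matrix $Y$ (the one sent to $I$): for instance $F=\sigma_s\bigl(X_{i_1}\operatorname{adj}(Y)X_{i_2}\operatorname{adj}(Y)\cdots X_{i_k}\operatorname{adj}(Y)\bigr)$ is a polynomial, and since $\operatorname{adj}(Y)\mapsto A\operatorname{adj}(Y)B^{-1}$ for $(A,B)\in\SL_n\times\SL_n$, each factor $X_i\operatorname{adj}(Y)$ is conjugated by $B$, so $F\in R(n,m+1)$ and $\phi^*(F)=\sigma_s(X_{i_1}\cdots X_{i_k})$ because $\operatorname{adj}(I)=I$. (More generally, for any $f\in S(n,m)$ one may take $F(Y,X_1,\dots,X_m)=f(X_1\operatorname{adj}(Y),\dots,X_m\operatorname{adj}(Y))$, which avoids Donkin's generators altogether.) With this replacement your outline becomes a correct proof; without it, the surjectivity step fails.
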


\begin{corollary} \label{something}
We have $\beta(S(n,m)) \leq \beta(R(n,m+1))$.
\end{corollary}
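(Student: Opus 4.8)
The plan is to read off the inequality directly from Domokos's surjection $\phi^* : R(n,m+1) \twoheadrightarrow S(n,m)$ of Proposition~\ref{msi.mi}, using the elementary fact that the image of an algebra generating set under a surjective homomorphism is again a generating set, combined with the observation that $\phi^*$ never raises degree.

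First I would fix homogeneous generators $g_1,\dots,g_s$ of the $K$-algebra $R(n,m+1)$ with $\deg g_i \leq \beta(R(n,m+1))$ for every $i$; such a set exists by the definition of $\beta$. Since $\phi^*$ is a surjective $K$-algebra homomorphism, $R(n,m+1) = K[g_1,\dots,g_s]$ forces $S(n,m) = K[\phi^*(g_1),\dots,\phi^*(g_s)]$, so the elements $\phi^*(g_i)$ already generate $S(n,m)$.

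Next I would control their degrees. The map $\phi:\M_{n,n}^m \to \M_{n,n}^{m+1}$ inserts the identity matrix into the first slot, so on coordinate rings $\phi^*$ sends each entry-coordinate of the first matrix to a scalar ($0$ or $1$) and fixes every other entry-coordinate. Consequently $\phi^*$ carries the multidegree $(d_0,d_1,\dots,d_m)$ piece of $K[\M_{n,n}^{m+1}]$ into the multidegree $(d_1,\dots,d_m)$ piece of $K[\M_{n,n}^m]$, and in particular it does not increase total degree. Hence $\deg \phi^*(g_i) \leq \deg g_i \leq \beta(R(n,m+1))$ for all $i$. Finally, $\phi^*(g_i)$ need not be homogeneous, but since $\SL_n \times \SL_n$ acts linearly on $\M_{n,n}^m$ and therefore preserves total degree, every homogeneous component of the invariant $\phi^*(g_i)$ is itself an invariant, of degree $\leq \beta(R(n,m+1))$; replacing each $\phi^*(g_i)$ by its homogeneous components still yields a generating set for $S(n,m)$. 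Thus the invariants of degree $\leq \beta(R(n,m+1))$ generate $S(n,m)$, which is precisely $\beta(S(n,m)) \leq \beta(R(n,m+1))$.

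Since Proposition~\ref{msi.mi} does all the real work, this is a short deduction; the only point requiring any care is the grading bookkeeping, namely that $\phi^*$ is merely degree-non-increasing rather than strictly graded. This is exactly what the homogeneous-component argument above addresses, and no further input is needed.
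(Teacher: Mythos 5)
Your proof is correct and follows the same route as the paper, which deduces the bound from Proposition~\ref{msi.mi} together with the observation that $\deg(\phi^*(f)) \leq \deg(f)$; you merely spell out the generator-pushforward and homogeneous-component bookkeeping that the paper leaves implicit.
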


\begin{proof}
This follows since $\deg(\phi^*(f)) \leq \deg(f)$ for all $f \in R(n,m+1)$. 
\end{proof}

\section{Proofs of main results}
The various tools developed in the previous sections fall into place to give our main results.

\begin{proof} [Proof of Theorem~\ref{msi}]
For $n = 1$, $\SL_1 \times \SL_1$ is trivial, and hence $R(1,m)$ is a polynomial ring in $m$ variables. Hence the result holds. 

Now, assume $n \geq 2$. We have that $R(n,m)$ is Cohen-Macaulay, by Corollary~\ref{CM}. We also have that $\deg(H(R(n,m),t) \leq 0$ in characteristic $0$ by Kempf's result (see \cite{Kempf}) and hence in any characteristic by Corollary~\ref{Hilb.msi}. In fact Knop's stronger bounds also apply, but we will not need them. By Theorem~\ref{nullcone.bound}, we have $r$ invariants of degree $n(n-1)$ that define the null cone. Further $r \leq \dim \M_{n,n}^m = mn^2$, so by Proposition~\ref{deg.bds.pos}, we have 
$$\beta(R(n,m)) \leq r n(n-1) \leq mn^3(n-1) \leq mn^4.$$
\end{proof}

\begin{remark} \label{stronger}
Note that the proof actually gives $\beta(R(n,m)) \leq mn^3(n-1)$ for $n \geq 2$. We will use this stronger bound to get the bounds for separating invariants for $S(n,m)$.
\end{remark}

\begin{proof} [Proofs of Corollary~\ref{si1} and Theorem~\ref{si2}]
The invariant rings under consideration are Cohen-Macaulay by Corollary~\ref{CM}, and that the degree of the Hilbert series is independent of the underlying field by Proposition~\ref{Hilb.si} and Corollary~\ref{Hilb.msi}. Also note that the results on the null cone in \cite{DM,DM2} were independent of the underlying field $K$. Hence the arguments in \cite{DM,DM2} work in arbitrary characteristic by replacing Theorem~\ref{D.gen} with Proposition~\ref{deg.bds.pos} wherever necessary.
\end{proof}

\begin{proof} [Proof of Theorem~\ref{mi.bds}]
This follows from Corollary~\ref{something}.
\end{proof}

\begin{proof}[Proof of Corollary~\ref{inv.bds}]
This follows from the surjection in Corollary~\ref{mi.iq}
\end{proof}

\begin{proof} [Proof of Corollary~\ref{Sep.invs}]
Observe that a set of generating invariants is a separating subset, and hence we have 
$$\beta_{\rm sep} (K[V^{\oplus n}]^G) \leq \beta_{\rm sep}(\K[V^{\oplus \dim V}]^G) \leq \beta (\K[V^{\oplus \dim V}]^G).$$ 

Now for parts $(1),(3)$ and $(4)$, the bounds for separating invariants follow from the corresponding bounds on generating invariants shown in this paper. 

For part $(2)$, we use the slightly stronger bounds in Remark~\ref{stronger}. So for $n \geq 2$, we get:

$$\beta_{\rm sep} (S(n,m)) \leq \beta_{\rm sep} (S(n,n^2)) \leq \beta(S(n,n^2)) \leq \beta(R(n,n^2+1)) \leq (n^2 +1)n^3(n-1) \leq n^6.$$

For $n = 1$, observe that $S(1,m) = \K[K^m]^{\GL_1} = K[K^m]$ i.e., a polynomial ring in $m$ variables,  and hence the bound follows for $n =1$ as well.

\end{proof}

\subsection*{Acknowledgements}
The second author would like to thank Alexandr N. Zubkov for helpful discussions.

\end{document}